\newcommand{\ignore}[1]{}
\title{Derivations on the algebra of Rajchman measures}
\author[Mahya Ghandehari]{Mahya Ghandehari}
\address{Department of Mathematical Sciences , University of Delaware}
\thanks{At the time this article was written, the author was supported by \emph{University of Delaware Research Foundation} grant number 17A00999.}
\email{mahya@udel.edu}
\date{\today}
\newtheorem{theorem}{Theorem}[section]
\newtheorem{proposition}[theorem]{Proposition}
\newtheorem{lemma}[theorem]{Lemma}
\newtheorem{corollary}[theorem]{Corollary}
\newtheorem{definition}[theorem]{Definition}
\newcommand{\cal}{\mathcal}
\xpatchcmd{\paragraph}{\normalfont}{{\normalfont\bfseries}}{}{}
\def\sup{{\rm sup}}
\def\supp {{\rm supp}}
\def\Re {{\rm Re}}
\def\SL2 {{\rm SL}_2({\mathbb R})}
\begin{document}
\maketitle

\begin{abstract}
For a locally compact Abelian group $G$, the algebra of Rajchman measures, denoted by $M_0(G)$, is the set of all  bounded regular Borel measures on $G$ whose Fourier transform vanish at infinity. 
In this paper,  we investigate the spectral structure of the algebra of Rajchman measures, and illustrate aspects of the residual analytic structure of its maximal ideal space.
In particular, we show that $M_0(G)$ has a nonzero continuous point derivation, whenever $G$ is a non-discrete locally compact Abelian group. 
We then give the definition of the Rajchman algebra for a general (not necessarily Abelian) locally compact group,  and prove that for a non-compact connected SIN group, the Rajchman algebra admits a nonzero continuous point derivation.
Moreover, we discuss the analytic behavior of the spectrum of $M_0(G)$. Namely, we show that for every non-discrete metrizable locally compact Abelian group $G$, the maximal ideal space of $M_0(G)$ contains  analytic discs.
\end{abstract}

\noindent {{\sc AMS Subject Classification:}  Primary 	;}
\newline
{{\sc Keywords:} Rajchman measure, Rajchman algebra, point derivation, analytic disc, strongly independent set.

\section{Introduction \label{sec:intro}}
Let $G$ be a locally compact group. The measure algebra of $G$, denoted by $M(G)$, is  the Banach $*$-algebra of bounded complex Radon measures on $G$. A measure $\mu$ in the measure algebra of a locally compact  Abelian group is called a \emph{Rajchman measure} if its Fourier-Stieltjes transform vanishes at infinity. The importance of Rajchman measures first became apparent in the study of uniqueness of trigonometric series. A subset $E$ of ${\mathbb T}$ is a \emph{set of uniqueness} (or a ${\cal U}$-set) if the trivial series is the only trigonometric series which converges to 0 on every element outside $E$. Sets of uniqueness are typically small. In fact, every Borel ${\cal U}$-set has Lebesgue measure zero. However the converse is not true. In 1916, Menshov  showed that there are closed sets of Lebesgue measure zero which are not sets of uniqueness~\cite{Menshov}. In his proof, Menshov constructed a Rajchman measure $\mu$ which does not belong to $L^1({\mathbb T})$. 

The collection of all Rajchman measures on a locally compact Abelian group $G$ forms a Banach subalgebra of  $M(G)$, which we denote by $M_0(G)$. 
The algebra of Rajchman measures  motivated the definition of the  \emph{Rajchman algebra} for general (not necessarily Abelian) locally compact groups.  Since a non-Abelian group does not  admit a Fourier transform in the classical sense,  non-commutative harmonic analysis has been instead founded upon a representation theoretic point of view (see \cite{eymard}). In particular, the Rajchman algebra of a locally compact group $G$, denoted by $B_0({G})$, is defined as the collection of all coefficient functions associated with unitary representations of $G$ which vanish at infinity. If $G$ is Abelian, the Fourier-Stieltjes transform maps $M_0(G)$ onto  the Rajchman algebra of the dual group. The significance of the Rajchman algebra is due to the fact that understanding the asymptotic behavior of unitary representations is  important because of its applications in other areas of mathematics such as the theory of automorphic forms and ergodic properties of flows on homogeneous spaces ({\it e.g.}  see \cite{Howe-Moore}, \cite{Moore-flow}, and \cite{Shintani}). The Rajchman algebra of a locally compact (not necessarily Abelian) group has been studied by several researchers over the past few decades (for example see \cite{Baggett-Taylor-A=B0=>reducible, Baggett-Taylor78, Baggett-Taylor82, B0, knudby1, Knudby2, KLU}).

In this paper, we investigate Banach algebraic features, in particular  {amenability properties and spectral behavior},  of  the Rajchman algebra of a locally compact group.
Amenability of a group is a fundamental notion  that was originally introduced by  von Neumann in 1929.
In 1972, Johnson defined {\it amenable Banach algebras} as those on which no nontrivial, yet sensible, continuous derivatives can be defined (see \cite{johnson72} for the precise definition). 
This cohomological  notion turned out to be very fruitful, and has been studied extensively.  For several important classes of Banach algebras, amenability
identifies the ones with ``good behavior''.   For example,  Connes~\cite{connes78} and  Haagerup~\cite{haagerup83}  showed that for $C^*$-algebras amenability and nuclearity coincide. Johnson's well-known theorem characterizing amenable $L^1$-algebras \cite{johnsonM}, and  Forrest and Runde's characterization of amenable Fourier algebras \cite{forrest-runde-A} are other instances of this phenomenon. We refer the reader to  \cite{rundeB} for a detailed account on amenability of Banach algebras.

Over the past few decades, deeper investigation of amenability-type properties of Banach algebras associated with locally compact groups, including the Fourier algebra and Rajchman algebra, has become a vibrant trend of research in abstract harmonic analysis ({\it e.g.} see \cite{Bade-Curtis-Dales, DGH, Dales-Pandey, Forrest-wood, forrest-runde-A,  Lau-Loy, rundes2}). 
One of the important and fundamental questions here is the existence and construction of derivations for these algebras.  The derivation
problem is of great importance, as it sheds substantial light on the structure of the algebra, and then in turn on the underlying group.
As an important example, the still open question of characterizing groups whose Fourier algebras do not have non-zero {\it symmetric} derivatives has attracted a lot of attention (see  \cite{CG-AG, CG-AG2, forrest-runde-A, Forrest-Samei-Spronk-w.amen-A, johnson95, LLSS}). 

Amongst all derivations, point derivations play a particularly important role. However, examples of point derivations are rare, and except in a few basic instances we do not know how to construct them. In this paper, we investigate the spectral structure of Rajchman algebras, and illustrate aspects of the residual analytic structure of their maximal ideal space. When $G$ is Abelian, the algebra of Rajchman measures $M_0(G)$ is a commutative convolution measure algebra, {\it i.e.} it has a natural lattice structure which is compatible with its Banach algebra structure. In \cite{J.Taylor}, Taylor  showed that one can construct analytic discs in the spectrum of a convolution measure algebra around any element with non-idempotent modulus. It is now interesting to study the possibilities for elements of the spectrum whose modulus are idempotent. In  \cite{Brown-Moran-pt-der} ,  Brown and Moran constructed nontrivial continuous point derivations for $M(G)$ at the discrete augmentation character. In a subsequent paper, they used a method of Varopoulos to construct analytic discs around the discrete augmentation character (see \cite{Brown-Moran-disc}). In the present article, we use a suitable decomposition of $M_0(G)$ to develop analogues results for the spectrum and derivations of $M_0(G)$. In particular, we obtain  analytic discs around certain idempotent characters of $M_0(G)$, when $G$ is a non-discrete metrizable locally compact Abelian group. 

Our methods and constructions in this paper are based on the notion of strongly independent sets. In his rather difficult and technical paper \cite{varopoulos2}, Varopoulos proved that if $G$ is a non-discrete metrizable locally compact Abelian group, then there exists a perfect strongly independent subset $P$ of $G$ such that $M_0^+(P)\neq \{0\}$. Moreover, as explained in \cite{B0}, it is easy to see that one may assume $P$ to be compact. Such independent sets may be used to analyze the measure algebra of  a locally compact group and its subalgebras. For example, in \cite{varopoulos},  Varopoulos obtains a direct decomposition of the algebra of continuous measures $M_c(G)$, and hence the measure algebra $M(G)$, of a non-discrete locally compact Abelian group $G$, through a very careful use of geometric and measure theoretic features of strongly independent sets.  As a consequence of this decomposition theorem and its extension to $M_0(G)$, it was shown in \cite{B0} that the Rajchman algebra does not have a bounded approximate identity if $G$ is a non-compact connected SIN group. The above-mentioned decomposition theorem also sheds light on the cohomology of $M_0(G)$ as a Banach algebra. In particular, it was shown in \cite{B0} that if $G$ is non-discrete and Abelian, then $M_0(G)$  admits a nonzero continuous derivation into a symmetric bimodule. It is then natural to ask whether $M_0(G)$ admits a nonzero continuous point derivation, {\it i.e.} one into a one-dimensional symmetric bimodule. 
In this article, we answer this question affirmatively. 

This paper is organized as follows. In Section \ref{section-background}, we present the necessary notation and background. In particular, we give a very brief overview of the decomposition of $M(G)$, and respectively $M_0(G)$, developed by Varopoulos in  \cite{varopoulos}. We finish this section by a quick introduction on derivations on Banach algebras. 
In Section \ref{section:analytic-disc}, we discuss the analytic behavior of the spectrum of $M_0(G)$. Namely, we show that for every non-discrete metrizable locally compact Abelian group $G$, the maximal ideal space of $M_0(G)$ contains an analytic disc.
Inspired by the construction of analytic discs, we prove in Section \ref{section:pt-der} that $M_0(G)$ has a nonzero continuous point derivation, if $G$ is a non-discrete locally compact Abelian group. We then give the definition of the Rajchman algebra of a non-Abelian locally compact group, and extend the result for Abelian groups to the case of SIN groups. 
\section*{Acknowledgements}
The research presented in this article was part of the authors Ph.D. thesis. I would
like to thank my supervisors Brian Forrest and Nico Spronk for their encouragement
and invaluable discussions and suggestions. Many thanks to H. Garth Dales and Colin Graham for their suggestions. 

\section{A decomposition theorem for $M_0(G)$}\label{section-background}

\paragraph{Notations and key definitions}
Let $G$ be a locally compact group with Haar measure $\lambda_G$.  The group algebra of $G$, denoted by $L^1(G)$, is the Banach $*$-algebra of integrable functions on $G$  equipped with pointwise addition and convolution product. Let $M(G)$ denote the Banach $*$-algebra of
bounded regular Borel measures on $G$, equipped with the total variation norm, where  the convolution product of two measures $\mu$ and $\nu$ in $M(G)$ is defined to be
$$\int_Gf(z)d(\mu*\nu)(z)=\int_G\int_Gf(xy)d\mu(x)d\nu(y),$$
for every $f$ in $C_c(G)$, the set of compactly supported continuous functions on $G$. 
The measure algebra $M(G)$ contains $L^1(G)$ as a closed ideal. 
Let $M_c(G)$ denote the set of all continuous measures in $M(G)$, {\it {{\it {i.e.}}}} the set of complex bounded Radon measures on $G$ which annihilate every singleton. Two measures $\mu$ and  $\nu$  in $M(G)$ are called \emph{mutually singular}, denoted by $\mu\bot \nu$,
if  there exists a partition $A\cup B$ of $G$ such that $\mu$ is concentrated in $A$ and $\nu$ is concentrated in $B$.
We say $\mu$ is \emph{absolutely continuous with respect to} $\nu$, denoted by $\mu\ll \nu$, if for every measurable set $A$ for which 
$|\nu|(A)=0$, we have $|\mu|(A)=0$ as well. 

For the rest of this article, we assume that $G$ is an \emph{Abelian} locally compact group, unless otherwise is stated. Let $\widehat{G}$ denote the Pontryagin dual of $G$, {\it i.e.} the  group consisting of all continuous  homomorphisms from $G$ into ${\mathbb T}$, together with the natural group operations and the compact-open topology ({\it i.e.} uniform convergence on compact sets). It turns out that $\widehat{G}$ is a locally compact Abelian group as well.  The \emph{Fourier transform of $G$} is the norm-decreasing  $*$-homomorphism ${\cal F}:L^1(G)\rightarrow C_0(\widehat{G})$ defined as 
$${\cal F}f(\chi)=\int_Gf(x)\overline{\chi(x)}d\lambda_G(x).$$
Let $C_b(G)$ denote the Banach $*$-algebra of bounded continuous functions on $G$. The Fourier transform can be extended to the \emph{Fourier-Stieltjes transform}, which is again a norm-decreasing  $*$-homomorphism defined as ${\cal FS}:M(G)\rightarrow C_b(\widehat{G})$, 
$${\cal F}\mu(\chi)=\int_G\overline{\chi(x)}d\mu(x),$$
for every $\mu\in M(G)$. See \cite{rudinbook} for a beautiful exposition on Fourier analysis of Abelian groups. 

The \emph{algebra of Rajchman measures} of $G$,  denoted by $M_0(G)$, is the collection  of all measures in $M(G)$ whose Fourier transforms vanish at infinity. 
For a measurable subset $E$ of $G$, define $M_c(E)$ (respectively $M_0(E)$) to be the set of all measures in $M_c(G)$ (respectively $M_0(G)$) which are supported in the set $E$.
It is easy to see that $M_c(G)$ and $M_0(G)$ are closed ideals of $M(G)$, and $M_0(G)$ is contained in $M_c(G)$. It turns out that $M_0(G)$ is also ``closed'' in the sense of absolute continuity, as formalized below. 
\begin{definition}\label{def:L-space}
A closed subspace $B$ of $M(G)$ is called an \emph{$L$-space} if  for every $\mu,\nu\in  M(G)$ satisfying $\nu\in B$ and $\mu\ll\nu$, we have $\mu\in B$.
\end{definition}
We remark that $L$-spaces are sometimes referred to as  ``bands'', for example this is the terminology  used in \cite{varopoulos}. We refer the reader to \cite{bourbaki-book} for more information on $L$-spaces. 
In the following lemma, we give an equivalent definition of an $L$-space, where ``$\ll$'' is replaced by ``$\leq$''.
\begin{lemma}\label{lem:band-properties1}
Let $B$ be a closed subspace of $M(G)$. Then
$B$ is an $L$-space if and only if it satisfies the following condition:
\begin{equation}\label{cond-band}
\mbox{If } \mu,\nu\in  M(G), \nu\in B, \mbox{ and } |\mu|\leq|\nu|, \mbox{ then } \mu\in B.
\end{equation}
\end{lemma}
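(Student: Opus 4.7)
The plan is to prove the two implications separately, with the forward direction being essentially automatic and the reverse direction requiring a standard truncation argument via Radon--Nikodym.

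For the forward implication, suppose $B$ is an $L$-space, and let $\mu,\nu\in M(G)$ with $\nu\in B$ and $|\mu|\leq|\nu|$. For any measurable $A\subseteq G$ with $|\nu|(A)=0$, monotonicity gives $|\mu|(A)\leq|\nu|(A)=0$, so $\mu\ll\nu$. The $L$-space property then yields $\mu\in B$. So the condition \eqref{cond-band} is weaker, in principle, than the $L$-space property.

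For the reverse implication, assume $B$ satisfies \eqref{cond-band} and suppose $\mu,\nu\in M(G)$ with $\nu\in B$ and $\mu\ll\nu$. Since $\nu$ and $|\nu|$ are mutually absolutely continuous, we have $\mu\ll|\nu|$ as well, so by the Radon--Nikodym theorem there exists $h\in L^1(|\nu|)$ with $d\mu=h\,d|\nu|$. The idea is to truncate: for each $n\in\mathbb{N}$, set
\[
h_n := h\cdot\mathbf{1}_{\{|h|\leq n\}}, \qquad \mu_n := h_n\cdot|\nu|.
\]
Then $|\mu_n| = |h_n|\cdot|\nu| \leq n|\nu| = |n\nu|$. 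Since $B$ is a linear subspace, $n\nu\in B$, so \eqref{cond-band} (applied to the pair $\mu_n, n\nu$) forces $\mu_n\in B$.

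It remains to check that $\mu_n\to\mu$ in total variation norm, for then the closedness of $B$ concludes the argument. Since $d(\mu-\mu_n) = (h-h_n)\,d|\nu|$, we have
\[
\|\mu-\mu_n\|_{\rm TV} \;=\; \int_G |h-h_n|\, d|\nu| \;=\; \int_{\{|h|>n\}}|h|\,d|\nu|,
\]
which tends to $0$ by the dominated convergence theorem, as $h\in L^1(|\nu|)$. The main (mild) obstacle is ensuring the correct passage between the absolute-continuity hypothesis stated with respect to $\nu$ and the Radon--Nikodym derivative with respect to $|\nu|$; this is handled by the elementary observation $\mu\ll\nu \iff \mu\ll|\nu|$.
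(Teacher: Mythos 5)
Your proof is correct and follows essentially the same route as the paper: truncate the Radon--Nikodym derivative, apply condition (\ref{cond-band}) to each truncation, and pass to the limit using closedness of $B$. The only cosmetic difference is that you truncate the derivative of $\mu$ itself (concluding via dominated convergence), while the paper truncates the derivative of $|\mu|$ (using monotone convergence) and then implicitly invokes the condition once more to pass from $|\mu|\in B$ to $\mu\in B$; your variant streamlines that last step.
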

\begin{proof}
First assume that $B$ is an $L$-space. Note that for measures $\mu$ and $\nu$  in $M(G)$, the inequality  $|\mu|\leq |\nu|$ clearly implies $\mu\ll\nu$, just by definition. Therefore
$B$ clearly satisfies Condition~(\ref{cond-band}) as well.

Conversely, assume that $B$ is a closed subspace of $M(G)$ that satisfies Condition~(\ref{cond-band}). Suppose $\mu,\nu\in M(G)$ and 
$\nu\in B$. First note that by Condition (\ref{cond-band}), $|\nu|$ belongs to $B$ as well.
Now assume that $\mu\ll\nu$, i.e. $|\mu|\ll|\nu|$.
So, by Radon-Nikodym Theorem, there exists  a non-negative Borel integrable  function $f$ such that  $|\mu|=f|\nu|$.  For each $n
\in {\mathbb N}$, let $f_n$ be defined by
$$f_n(x)=\left\{
\begin{array}{cc}
   f(x) & \mbox{ if } f(x)\leq n \\
    n & \mbox{ otherwise }
  \end{array}
\right..
$$
 Note that $f_n|\nu|\leq n|\nu|$, which implies that $f_n|\nu|$ belongs to $B$. 
 On the other hand, by monotone convergence theorem, 
 $$\|f|\nu|-f_n|\nu|\|_{M(G)}=(f|\nu|-f_n|\nu|)(G)=\int_Gf d|\nu|-\int_Gf_n d|\nu|\rightarrow 0.$$
 Therefore $f|\nu|$, being the limit of $f_n|\nu|$'s, belongs to $B$ as well.
 \end{proof}
In the following lemma,  we list some easy properties of $L$-spaces, that we will use in future.
\begin{lemma}\label{lem:band-properties2}
Let $B$ be an $L$-space in $M(G)$, and $\mu\in M(G)$.
\begin{itemize}
\item[(a)] If $\mu\in B$ then $|\mu|\in B$ and ${\cal R}\mu, {\cal I}\mu\in B$, where ${\cal R}\mu$ and ${\cal I}\mu$ denote the real and imaginary parts of the measure $\mu$.
\item[(b)] If $|\mu|\in B$ then $\mu\in B$.
\end{itemize}
\end{lemma}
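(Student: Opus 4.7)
The plan is to derive everything directly from the equivalent formulation of the $L$-space property established in Lemma~\ref{lem:band-properties1}. All three conclusions reduce to producing a suitable $\nu \in B$ such that the target measure has total variation dominated by $|\nu|$.

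For part (a), I would first handle $|\mu|$: given $\mu \in B$, apply the condition (\ref{cond-band}) from Lemma~\ref{lem:band-properties1} with the target measure taken to be $|\mu|$ (a positive element of $M(G)$) and the dominating member of $B$ taken to be $\mu$ itself. The inequality $\bigl||\mu|\bigr| = |\mu| \leq |\mu|$ is trivial, so $|\mu| \in B$. For the real and imaginary parts, I would invoke the standard measure-theoretic estimates $|\mathcal{R}\mu| \leq |\mu|$ and $|\mathcal{I}\mu| \leq |\mu|$, which follow from the Jordan decomposition (or equivalently from writing $\mathcal{R}\mu = \tfrac{1}{2}(\mu + \bar{\mu})$ and bounding each summand's total variation). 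Lemma~\ref{lem:band-properties1} then yields $\mathcal{R}\mu, \mathcal{I}\mu \in B$.

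For part (b), the argument is even shorter: if $|\mu| \in B$, take $\nu = |\mu|$ in Lemma~\ref{lem:band-properties1}. Then $\nu \in B$, and since $\nu$ is already a positive measure, $|\nu| = \nu = |\mu|$, so the inequality $|\mu| \leq |\nu|$ is automatic. The lemma then gives $\mu \in B$.

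There is no serious obstacle here; the whole point is that Lemma~\ref{lem:band-properties1} was already designed to be exactly the right tool. The only item that needs a brief justification is the pointwise domination $|\mathcal{R}\mu|, |\mathcal{I}\mu| \leq |\mu|$ on Borel sets, and I would note this in a single sentence referring to the Jordan/polar decomposition of a complex measure rather than grinding through it.
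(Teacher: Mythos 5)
Your proof is correct and follows essentially the same route as the paper: both arguments reduce each claim to the defining closure property of an $L$-space via the trivial observations that $|\mu|$, ${\cal R}\mu$, ${\cal I}\mu$ are controlled by $\mu$ and that $\mu$ is controlled by $|\mu|$. The only (immaterial) difference is that you invoke the domination form $|\cdot|\leq|\cdot|$ from Lemma~\ref{lem:band-properties1}, whereas the paper appeals directly to absolute continuity in Definition~\ref{def:L-space}; these are interchangeable by the equivalence already established.
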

\begin{proof}
\begin{itemize}
\item[(a)] Suppose $\mu\in B$. From the definition of an $L$-space, it is clear that $|\mu|$, ${\cal R}\mu$ and ${\cal I}\mu$ belong to $B$ as well, since these measures are all absolutely continuous with respect to $\mu$. 
\item[(b)] This follows trivially from the definition of an $L$-space.
\end{itemize}
\end{proof}
It is known that $M_0(G)$ is a translation-invariant $L$-subspace of $M(G)$ (for example see \cite{Grahambook} for a proof). This important feature of $M_0(G)$ will be used in future extensively. In the following lemma, we use properties of $L$-spaces to give an easy proof of  the well-known fact that $M_0(G)$ is contained in $M_c(G)$.

\begin{lemma}\label{lem:M0inMc}
For a non-discrete  locally compact Abelian group $G$, $M_0(G)\subseteq M_c(G)$.
\end{lemma}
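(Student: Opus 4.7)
The plan is to argue by contradiction: suppose some $\mu \in M_0(G)$ has an atom at a point $x_0 \in G$ with $\mu(\{x_0\}) = c \neq 0$, and derive a contradiction from the assumption that $G$ is non-discrete.

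First, I would isolate the atomic part. Writing $\mu = c\delta_{x_0} + \nu$ where $\nu$ carries no mass at $x_0$, the measures $c\delta_{x_0}$ and $\nu$ are mutually singular (supported on $\{x_0\}$ and $G \setminus \{x_0\}$ respectively). Therefore $|\mu| = |c|\delta_{x_0} + |\nu|$, so in particular $|c\delta_{x_0}| \leq |\mu|$. Since $M_0(G)$ has been stated to be an $L$-subspace of $M(G)$, Lemma~\ref{lem:band-properties1} (or equivalently Lemma~\ref{lem:band-properties2}(b) applied to $|c\delta_{x_0}|$) gives $c\delta_{x_0} \in M_0(G)$, and dividing by the nonzero scalar $c$ yields $\delta_{x_0} \in M_0(G)$.

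Next I would compute the Fourier--Stieltjes transform of $\delta_{x_0}$: directly from the definition, $\mathcal{F}\delta_{x_0}(\chi) = \overline{\chi(x_0)}$, which has modulus identically equal to $1$ on $\widehat{G}$. For this function to lie in $C_0(\widehat{G})$, the dual group $\widehat{G}$ would have to be compact. The final step is then to invoke Pontryagin duality: $\widehat{G}$ is compact if and only if $G$ is discrete. Since $G$ is assumed non-discrete, $\widehat{G}$ is not compact, and we obtain the desired contradiction.

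I do not anticipate a genuine obstacle here; the argument is entirely structural once the $L$-space property of $M_0(G)$ (already noted in the text) is in hand. The only point that merits a line of care is the extraction of the atom $c\delta_{x_0}$ as a well-defined summand of $\mu$ with $|c\delta_{x_0}| \leq |\mu|$, which is what allows the $L$-space property to be triggered and produce the offending point mass inside $M_0(G)$.
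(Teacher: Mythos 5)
Your argument is correct and follows essentially the same route as the paper: both extract a point mass $\delta_{x_0}$ from a hypothetical non-continuous member of $M_0(G)$ via the $L$-space property, and both reach the contradiction that $|\mathcal{F}\delta_{x_0}|\equiv 1$ cannot vanish at infinity on the non-compact dual $\widehat{G}$. The only cosmetic difference is that the paper first splits $\mu$ into its continuous and discrete parts (after a reduction to real measures) before isolating an atom, whereas you isolate a single atom directly; the substance is identical.
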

\begin{proof}
Suppose $M_0(G)\nsubseteq M_c(G)$ and let $\mu\in M_0(G)\setminus M_c(G)$. Note that by Lemma \ref{lem:band-properties2}, the real and imaginary components of $\mu$, denoted by ${\cal R}\mu$ and ${\cal I}\mu$, belong to $M_0(G)$ as well. Moreover, at least one of ${\cal R}\mu$ or ${\cal I}\mu$ is not a continuous measure, otherwise $\mu\in M_c(G)$. Hence without loss of generality, we can assume that $\mu$ is a real measure. Let $\mu=\mu_1+\mu_2$ be the orthogonal decomposition of $\mu$ into
$\mu_1$ in $M_c(G)$ and $0\neq\mu_2$ in $\Delta(G)$, where $\Delta(G)$ is the subalgebra of $M(G)$ consisting of all discrete measures on $G$. Clearly $\mu_2\ll\mu$, which implies that $\mu_2$ belongs to $M_0(G)$. Thus, there exists some $g$ in $G$ so that the point mass $\delta_g$
belongs to $M_0(G)$ as well. However, $|\widehat{\delta_g}(\chi)|=|\overline{\chi(g)}|=1$ for every $\chi\in \widehat{G}$, which is a contradiction with the definition of $M_0(G)$. Here, we used the well-known fact that if $G$ is non-discrete then its dual group $\widehat{G}$ is non-compact. 
\end{proof}

\paragraph{Strongly independent sets}\label{subsection:strongly-indep}
Throughout this section, we assume that $G$ is a locally compact {Abelian} group, and we present a very brief overview of the decomposition of $M(G)$, and respectively $M_0(G)$, developed by Varopoulos in \cite{varopoulos}. We begin by the definition of a strongly independent set, which is the main tool for analysis and constructions  in  \cite{varopoulos}.  We refer the reader to \cite{varopoulos} for more details and proofs on this topic.

For a subset  $P$ of a locally compact Abelian group $G$, let $k(P)$, called the \emph{torsion} of $P$, denote the smallest positive integer $k$ such that $\{kx:x\in P\}=\{0_G\}$, if such an integer exists. Otherwise, set $k(P)=\infty$. A \emph{reduced sum} on a  subset $P$ of torsion $k(P)$ is a formal expression $\sum_{i\in I}\dot{n_i}p_i$, where $I$ is a possibly empty finite index set,  $p_i$'s are distinct elements of $P$, and
$$0\neq \dot{n_i}\in \mathbb{Z}({\rm  mod }\  k(P)).$$
\begin{definition}
A subset $P\subseteq G$ is called \emph{strongly independent} if for any positive integer $N$, any family $\{p_j\}_{j=1}^N$ of distinct elements of $P$, and any family of integers $\{n_j\}_{j=1}^N$, the equality $\sum_{j=1}^N n_jp_j=0_G$ implies that $n_j$ is a multiple of $k(P)$ for each $1\leq j\leq N$, unless $k(P)=\infty$, in which case $n_j=0$ for each $1\leq j\leq N$.
\end{definition}
Let ${\rm Gp}(P)$ denote the group generated by $P$ in $G$. The computational advantage of a strongly independent set $P$ lies in that fact that every $x$ in ${\rm Gp}(P)$  can be expressed uniquely (up to permutation) as a reduced sum. Note that if $G$ is a non-discrete locally compact Abelian group then $G$ has a perfect metrizable subset $P$ which is strongly independent
\cite{varopoulos2}. Moreover, if $G$ is metrizable as well, we can assume that the above-mentioned subset $P$ satisfies the additional condition  $M_0^+(P)\neq \{0\}$,  as stated in the following theorem.
\begin{theorem}\cite{varopoulos2}\label{thm:varop2}
Let $G$ be a non-discrete metrizable locally compact Abelian group. Then there exists a perfect strongly independent subset $P$ of $G$
so that
$$M_0(P):=\Big\{\mu\in M_0(G):\supp(\mu)\subseteq P\Big\}\neq \{0\}.$$
\end{theorem}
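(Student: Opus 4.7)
The plan is to construct $P$ and a nonzero measure $\mu \in M_0(P)$ simultaneously by a Cantor-type inductive scheme. Since $G$ is metrizable, its Pontryagin dual $\widehat{G}$ is $\sigma$-compact, so one may fix an exhaustion $K_1 \subseteq K_2 \subseteq \cdots$ of $\widehat{G}$ by compact sets whose union is $\widehat{G}$. A bounded continuous function $f$ on $\widehat{G}$ lies in $C_0(\widehat{G})$ precisely when $\sup_{\chi \notin K_n} |f(\chi)| \to 0$, so the Rajchman condition will be captured by a uniform Fourier estimate outside $K_n$. I would also fix a translation-invariant metric $d$ inducing the topology of $G$.

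The inductive step produces, at stage $n$, a finite set $F_n \subseteq G$ and a radius $r_n > 0$ such that:
\begin{itemize}
\item[(i)] the balls $\{B(p, r_n) : p \in F_n\}$ are pairwise disjoint, and $r_n \to 0$;
\item[(ii)] $F_{n+1} \subseteq \bigcup_{p \in F_n} B(p, r_n)$, each such ball containing at least two points of $F_{n+1}$;
\item[(iii)] $\bigcup_n F_n$ is strongly independent in $G$;
\item[(iv)] $\mu_n := |F_n|^{-1} \sum_{p \in F_n} \delta_p$ satisfies $\sup_{\chi \notin K_n} |\widehat{\mu_n}(\chi)| < 1/n$.
\end{itemize}
Setting $P := \bigcap_n \overline{\bigcup_{p \in F_n} B(p, r_n)}$ yields a compact perfect subset of $G$, while weak-$*$ compactness of the unit ball of $M(G)$ produces a subsequential limit $\mu$ of $(\mu_n)$, necessarily a probability measure supported on $P$ and hence nonzero. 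Properties (ii) and (iv) pass to the limit, giving $\widehat{\mu} \in C_0(\widehat{G})$, so $\mu \in M_0(P)$.

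Condition (iii) is not particularly delicate. At stage $n$ there are only finitely many nontrivial reduced-sum relations between the points of $F_n$ and the proposed new points to worry about, and each such relation defines a closed, nowhere-dense subset of the parameter space of admissible children. Because $G$ is non-discrete, small perturbations afford enough freedom to evade all these bad loci simultaneously, while preserving (i) and (ii).

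The genuine obstacle is (iv): one must secure cancellation in $\widehat{\mu_n}(\chi) = |F_n|^{-1}\sum_{p \in F_n}\overline{\chi(p)}$ uniformly for $\chi$ ranging outside the possibly very large compact set $K_n$. The standard route is to choose the children inside each parent ball $B(p, r_{n-1})$ in the form $p + \tau q_s$ for a small step $\tau > 0$ and a finite configuration $\{q_s\}$ of arithmetic-independence (``dissociate'') type, so that Sidon-style inequalities or a Riesz-product calculation yield an average bound on $\widehat{\mu_n}$ over the relevant characters, after which a probabilistic (first-moment) argument supplies a deterministic realization meeting (iv). Reconciling this Fourier-cancellation scheme with the strong-independence scheme of (iii)---ensuring that the perturbations needed to enforce (iv) do not create new reduced-sum relations, and vice versa---is where the real work lies, and it is precisely the technical heart of the construction carried out by Varopoulos in \cite{varopoulos2}.
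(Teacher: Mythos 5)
The paper does not prove Theorem~\ref{thm:varop2}; it quotes it from \cite{varopoulos2} and explicitly warns that the proof there is ``rather difficult and technical'' and proceeds through structure theorems and a case-by-case treatment of special groups. So the only question is whether your sketch itself constitutes a proof, and it does not. Items (i)--(iii) and the weak-$*$ limiting argument are the routine part of the scheme (essentially Rudin's construction for ${\mathbb T}$ in \cite{Rudin-independent-set-M0}, which the paper calls a ``weaker version'' of this theorem). The entire content of the theorem is your item (iv), and there you write that ``the standard route'' is a Riesz-product or Sidon-type average bound followed by a first-moment argument, and then concede that reconciling this with (iii) ``is precisely the technical heart of the construction carried out by Varopoulos.'' That is a citation of the result to be proved, not a proof of it.

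Two concrete obstructions beyond this deferral. First, a first-moment argument controls an \emph{average} of $|\widehat{\mu_n}|$ over a finite or compact family of characters, whereas (iv) demands a \emph{sup} bound over the non-compact set $\widehat{G}\setminus K_n$; since $\mu_n$ is finitely supported, Wiener's theorem already forces $\sup_{\chi\notin K}|\widehat{\mu_n}(\chi)|\geq |F_n|^{-1/2}$ for every compact $K$, and upgrading an average bound to a uniform bound at infinity for a discrete measure is exactly where the arithmetic/structural input enters --- which is why Varopoulos must treat ${\mathbb R}^n$, groups with compact open subgroups, bounded-torsion duals, etc., separately (your template ``$p+\tau q_s$ for a small step $\tau$'' does not even make sense in a group such as $({\mathbb Z}/2)^{\mathbb N}$, where every element has order $2$). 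Second, (iv) as stated does not ``pass to the limit'': the weak-$*$ limit satisfies $\widehat{\mu}(\chi)=\lim_k\widehat{\mu_{n_k}}(\chi)$, so to bound $|\widehat{\mu}(\chi)|$ for $\chi\notin K_n$ you need \emph{all} later $\widehat{\mu_m}$, $m\geq n$, to be small off $K_n$, not just $\widehat{\mu_n}$; this requires strengthening the induction hypothesis to control $\sup_{\chi\in K_{m+1}}|\widehat{\mu_{m+1}}(\chi)-\widehat{\mu_m}(\chi)|$ by equicontinuity of compact character sets together with summably small perturbation radii $r_m$. The second point is repairable bookkeeping; the first is the theorem itself, and it is left unproved.
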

The proof of the above theorem is rather difficult and technical. In fact, the argument in \cite{varopoulos2} relies on structural theorems and treatment of some special groups. A weaker version of Theorem 
\ref{thm:varop2} was proved by Rudin in \cite{Rudin-independent-set-M0} for the special case where $G={\mathbb T}$.
The following lemma is an easy consequence of Theorem \ref{thm:varop2} together with measure-theoretic features of $M_0(G)$, and  will be used to construct nonzero continuous point derivations on $M_0(G)$ later on.
\begin{lemma}\label{lem:compact-strongly-indep-M_0nonzero}
Let $G$ be a non-discrete metrizable locally compact Abelian group. Then there exists a compact perfect strongly independent subset $P$ of $G$
such that $M_0^+(P)\neq \{0\}$.
\end{lemma}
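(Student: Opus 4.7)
The plan is to start from Theorem~\ref{thm:varop2}, which supplies a perfect strongly independent subset $P\subseteq G$ with $M_0(P)\neq\{0\}$, and then cut down to a compact subset that preserves all three properties (compactness, perfectness, strong independence) while still carrying a nonzero positive Rajchman measure. The key auxiliary facts are that $M_0(G)$ is an $L$-subspace of $M(G)$ (so that restrictions and absolute values of Rajchman measures remain Rajchman), and that $M_0(G)\subseteq M_c(G)$ by Lemma~\ref{lem:M0inMc} (so that a Rajchman measure can have no point masses).

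First I would pick a nonzero $\mu\in M_0(P)$ from Theorem~\ref{thm:varop2}. By Lemma~\ref{lem:band-properties2}(a), $|\mu|$ lies in $M_0(G)$ and is still supported in $P$, so I may replace $\mu$ by $|\mu|$ and assume $\mu\in M_0^+(P)$. By inner regularity of $\mu$, there exists a compact set $K\subseteq P$ with $\mu(K)>0$. Set
\[
\nu \;=\; \mathbf{1}_K\,\mu.
\]
Since $\nu\leq\mu$ and $M_0(G)$ is an $L$-space, Lemma~\ref{lem:band-properties1} gives $\nu\in M_0(G)$; by construction $\nu$ is a nonzero positive measure supported in the compact set $K\subseteq P$.

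Now define $P'=\supp(\nu)$. Then $P'$ is closed in $K$, hence compact; and as $P'\subseteq P$, strong independence is automatically inherited, since every subset of a strongly independent set is strongly independent. The only property that needs a short argument is perfectness. By Lemma~\ref{lem:M0inMc}, $\nu\in M_0(G)\subseteq M_c(G)$, so $\nu$ has no atoms. If some $x\in P'$ were isolated in $P'$, there would be an open neighbourhood $U$ of $x$ in $G$ with $U\cap P'=\{x\}$; then $\nu(U)=\nu(\{x\})=0$, contradicting $x\in\supp(\nu)$. Hence $P'$ has no isolated points and is perfect. Since $\nu\in M_0^+(P')$ is nonzero, this completes the construction.

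The argument is essentially a ``regularity plus support'' trimming, and the only nonroutine point is the preservation of perfectness; that is precisely where the inclusion $M_0(G)\subseteq M_c(G)$ (Lemma~\ref{lem:M0inMc}) does the work, and I do not foresee a more substantial obstacle.
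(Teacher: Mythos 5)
Your argument is correct and follows essentially the same route as the paper's proof: pass to $|\mu|$ via the $L$-space property, use inner regularity to find a compact $K$ with positive mass, restrict to $K$, and take the support, with perfectness following from the continuity of Rajchman measures (Lemma~\ref{lem:M0inMc}). You have simply spelled out the isolated-point argument in more detail than the paper does.
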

\begin{proof}
By Theorem \ref{thm:varop2} there exists a perfect metrizable strongly independent subset  $P'$ of $G$ which supports a nonzero Rajchman measure $\mu_0$. It is known that $M_0(G)$ is an $L$-space \cite{Grahambook}. Therefore by Lemma \ref{lem:band-properties2}, without loss of generality
we can assume that $\mu_0$ is a positive measure. Since $\mu_0(P')>0$ and $\mu_0$ is a Radon
measure,  there exists a compact subset $K$ of $P$ for which $\mu_0(K)>0$.
Note that $\mu_0|_K$ is a positive measure supported in $K$ and dominated by $\mu_0$. So by Lemma \ref{lem:band-properties1},  $\mu_0|_K$ belongs to  $M_0(K)=M_0(G)\cap M(K)$.  Moreover, ${\rm supp}(\mu_0|_K)$ is still a perfect set, because $\mu_0|_K$ is a continuous  measure according to Lemma \ref{lem:M0inMc}.
Let $P={\rm supp}(\mu_0|_K)$. Clearly $P$ is a strongly independent set, since it is a subset of the strongly independent set $P'$.
Hence $P$ is a compact perfect strongly independent
subset of $G$ with $M_0(P)\neq \{0\}$.
\end{proof}

\paragraph{A decomposition of $M(G)$ and $M_0(G)$}\label{subsection:M(G)decomp} 
Let $G$ be a non-discrete metrizable locally compact Abelian group.
Fix a strongly independent compact perfect metrizable subset $P$ of $G$ which satisfies $M_0(P)\neq \{0\}$. For every $n\in {\mathbb N}$, let $B_n$ denote the $L$-space generated by products of $n$ elements of $M_c(P)$, {\it {i.e.}}
$$B_n=\Big\{\mu\in M(G):\mu\ll \mu_1*\ldots *\mu_n \mbox{ for some } \mu_1,\ldots,\mu_n\in M_c(P)\Big\}.$$
Clearly, $B_n$ is an  $L$-subspace of $M(nP)$ for every $n\in {\mathbb N}$. 
Using geometric and measure theoretic properties of strongly independent sets, it was shown in  \cite{varopoulos} that $\delta_g*B_n\perp \delta_{g'}*B_m$ whenever $(n,g)\neq (m,g')$, $n,m\in {\mathbb N}$, and $g, g'\in G$ (see Lemma 3.1 of \cite{varopoulos} for the proof). 
The $L$-spaces $B_n$ are used as building blocks for the following decomposition of $M(G)$.

\begin{theorem}\label{thm:decomposition-M}\cite{varopoulos}
Let $G$ be a non-discrete  locally compact Abelian group, and 
$P$ be a perfect metrizable strongly independent subset of $G$.
Let $\Pi=\oplus_{g\in G, n\in {\mathbb N}}\delta_g*B_n$ and $I=\Pi^{\perp}\cap M_c(G)$. Then 
\begin{itemize}
\item $\Pi$ is a translation-invariant $L$-subspace of $M_c(G)$, which is a closed subalgebra as well.  
\item $I$ is an ideal and $L$-subspace of $M(G)$.
\item One can decompose $M_c(G)$ as $M_c(G)=\Pi\oplus I$ (direct and orthogonal decomposition).
\end{itemize}
\end{theorem}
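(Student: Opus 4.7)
The strategy is to deduce the three assertions from two ingredients: the orthogonality Lemma 3.1 of \cite{varopoulos}, which gives $\delta_g * B_n \perp \delta_{g'} * B_m$ whenever $(n,g) \neq (m,g')$, together with the standard band theory of $L$-subspaces (intersections and orthogonal complements of $L$-subspaces are again $L$-subspaces, and for any $L$-subspace $L$ one has the Lebesgue-type decomposition $M(G) = L \oplus L^\perp$).

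I would first verify that $\Pi$ is a translation-invariant $L$-subspace of $M_c(G)$ which is also a subalgebra. Each $B_n$ is an $L$-subspace by construction, and since translation by $\delta_g$ is an isometric linear bijection of $M(G)$ preserving absolute continuity, each $\delta_g * B_n$ is an $L$-subspace; it sits inside $M_c(G)$ because $M_c(G)$ is itself a translation-invariant $L$-ideal containing $M_c(P)$. Thanks to the pairwise singularity from Lemma~3.1 and $\sigma$-finiteness of total variation, every $\mu \in \Pi$ admits a unique at-most-countable orthogonal decomposition $\mu = \sum_{(g,n)} \mu_{g,n}$ with $\mu_{g,n}\in\delta_g * B_n$ and $\sum\|\mu_{g,n}\|=\|\mu\|$. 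This interpretation makes $\Pi$ norm-closed and an $L$-subspace: for $\mu \ll \nu \in \Pi$, the decomposition of $\nu$ restricts piece-by-piece to one of $\mu$, each piece being absolutely continuous with respect to the corresponding $\nu_{g,n}$ and therefore in $\delta_g * B_n$. Translation-invariance is the identity $\delta_h * (\delta_g * B_n) = \delta_{g+h} * B_n$, which merely permutes the summands. The subalgebra property reduces to an elementary check on elementary summands: if $\mu \ll \delta_g * \mu_1 * \cdots * \mu_n$ and $\nu \ll \delta_{g'} * \nu_1 * \cdots * \nu_m$ with $\mu_i,\nu_j \in M_c(P)$, then $\mu*\nu \ll \delta_{g+g'} * \mu_1 * \cdots * \mu_n * \nu_1 * \cdots * \nu_m$, giving $\mu*\nu \in \delta_{g+g'} * B_{n+m} \subseteq \Pi$; bilinear extension plus norm-closedness of $\Pi$ finishes the claim.

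For $I$, the $L$-subspace property is formal: $\Pi^\perp$ is an $L$-subspace (orthogonal complement of a band), $M_c(G)$ is an $L$-subspace, and intersections of $L$-subspaces are $L$-subspaces. Translation-invariance of $I$ follows from that of $\Pi$ and $M_c(G)$ together with the fact that isometric translation preserves mutual singularity. The Lebesgue-band decomposition applied to the $L$-subspace $\Pi$ then gives $M(G)=\Pi\oplus\Pi^\perp$ as a direct and orthogonal sum, and intersecting with $M_c(G)\supseteq\Pi$ yields $M_c(G)=\Pi\oplus I$.

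The delicate step, and the main obstacle, is the ideal property of $I$ in $M(G)$. Containment $M(G)*I\subseteq M_c(G)$ is immediate since $M_c(G)$ is an ideal, but the orthogonality $\mu*\nu\perp\Pi$ for $\mu\in M(G)$, $\nu\in I$ does not follow from translation-invariance alone — in $M(\mathbb R)$, for instance, the singular measures form a translation-invariant $L$-subspace that fails to be an ideal. One must exploit the finer structure of the $B_n$'s: after reducing via translation-invariance of $\Pi$ to checking $M(G)*I\perp B_n$ for each $n$, one has to show that convolving an arbitrary bounded measure with an element of $I$ cannot generate a nonzero component absolutely continuous with respect to any product of $n$ factors in $M_c(P)$. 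This is precisely where the arithmetic of reduced sums over the strongly independent set $P$ enters, and it is the heart of Varopoulos' argument in \cite{varopoulos}; once that geometric input is granted, the remainder of the theorem is the formal band calculus sketched above.
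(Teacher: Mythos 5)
This theorem is stated in the paper as an imported result: it carries the citation \cite{varopoulos} and no proof is given in the text, the only supporting remark being that the pairwise singularity $\delta_g*B_n\perp\delta_{g'}*B_m$ for $(n,g)\neq(m,g')$ is Lemma 3.1 of \cite{varopoulos}. So there is no in-paper proof to compare against; your write-up is in fact more detailed than what the paper provides. Your formal band calculus is correct: each $\delta_g*B_n$ is an $L$-subspace, the $\ell^1$-orthogonal sum of mutually singular bands is a closed $L$-subspace, the subalgebra property follows from $\mu\ll\sigma,\ \nu\ll\tau\Rightarrow\mu*\nu\ll\sigma*\tau$ applied to elementary summands and extended by continuity, $\Pi^\perp\cap M_c(G)$ is a band, and the Lebesgue-type decomposition $M(G)=\Pi\oplus\Pi^\perp$ intersected with $M_c(G)$ gives the third bullet.

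You are also right to isolate the ideal property of $I$ as the genuine content, and your observation that translation-invariance of an $L$-subspace does not imply the ideal property (the singular measures in $M(\mathbb R)$ being the standard counterexample) is a correct diagnosis of why something more is needed. That step is not proved in your proposal — you reduce it to "the arithmetic of reduced sums over $P$" and defer to Varopoulos — so as a self-contained argument there is a gap there; but since the paper itself treats the entire theorem (including the key orthogonality lemma) as a black box from \cite{varopoulos}, your reduction of the theorem to exactly the two geometric inputs of that reference is an appropriate and accurate account of where the real work lives.
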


We remark that the strongly independent set $P$ plays a crucial role in the above theorem. Indeed, to guarantee the orthogonality of blocks in $\Pi$, it is necessary to use components of the form $B_n$
rather than the whole space $M_c(nP)$.
In fact, it is not even true that ``$M_c(g_1+nP)\perp M_c(g_2+mP)$ for $(g_1,n)\not=(g_2,m)$''.
For instance, if $q$ is an element of $P$ then
$q+P\subseteq 2P$ and $M_c(q+P)\subseteq M_c(2P)$.

To study the spectrum of the algebra of Rajchman measures, we need analogue decompositions for $M_0(G)$, which was not explicitly stated in \cite{varopoulos}. So, we present the proof of the following theorem, which relies heavily on Theorem \ref{thm:varop2} and Theorem \ref{thm:decomposition-M}.
\begin{theorem}\label{thm:decomposition-M_0}
Let $G$ be  a non-discrete metrizable  locally compact Abelian group.  There exists a  perfect metrizable strongly independent subset $P$ of $G$ for which the decomposition of $M_c(G)$ as in 
Theorem \ref{thm:decomposition-M}, gives a nontrivial decomposition of $M_0(G)$, {\it i.e.}
$$M_0(G)=\Pi_0\oplus I_0,$$
where $\Pi_0=\Pi\cap M_0(G)$ is a closed subalgebra and $I_0=I\cap M_0(G)$ is an ideal of the Banach algebra $M_0(G)$. In addition, both $\Pi_0$
and $I_0$ are nontrivial $L$-subspaces of $M(G)$.
\end{theorem}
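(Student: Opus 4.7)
The plan is to apply Theorem~\ref{thm:decomposition-M} with a carefully chosen strongly independent set $P$, and then verify that the resulting decomposition of $M_c(G)$ restricts to a nontrivial decomposition of $M_0(G)$. First, using Lemma~\ref{lem:compact-strongly-indep-M_0nonzero}, I take a compact perfect strongly independent subset $Q$ of $G$ together with a nonzero measure $\mu_0 \in M_0^+(Q)$. Since $\mu_0$ is continuous (Lemma~\ref{lem:M0inMc}) and $\supp(\mu_0)$ is perfect hence uncountable, metrizability lets me pick two disjoint compact subsets $K_1, K_2 \subseteq \supp(\mu_0)$ each of positive $\mu_0$-mass, e.g.\ by intersecting $\supp(\mu_0)$ with two disjoint closed balls around distinct points. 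I then set $P := \supp(\mu_0|_{K_1})$; this $P$ is compact, perfect (as the support of a continuous measure), and strongly independent (as a subset of $Q$), and $\mu_0|_{K_1}$ is a nonzero element of $M_0^+(P)$.

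With this $P$, Theorem~\ref{thm:decomposition-M} gives $M_c(G) = \Pi \oplus I$. For each $\mu \in M_0(G) \subseteq M_c(G)$ write $\mu = \mu_\Pi + \mu_I$ with $\mu_\Pi \in \Pi$ and $\mu_I \in I$ mutually singular, so that $|\mu| = |\mu_\Pi| + |\mu_I|$ and both components are absolutely continuous with respect to $\mu$. Since $M_0(G)$ is an $L$-space, both land in $M_0(G)$, giving $M_0(G) = \Pi_0 \oplus I_0$ with $\Pi_0 := \Pi \cap M_0(G)$ and $I_0 := I \cap M_0(G)$. Both are $L$-subspaces as intersections of $L$-subspaces. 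Because $\Pi$ is a subalgebra of $M(G)$ and $M_0(G)$ is a closed ideal of $M(G)$, the subspace $\Pi_0$ is a closed subalgebra; similarly, since $I$ is an ideal of $M(G)$, $I_0$ is a closed ideal of $M_0(G)$. Nontriviality of $\Pi_0$ is immediate: $\mu_0|_{K_1} \in M_0(P) \subseteq M_c(P) \subseteq B_1 \subseteq \Pi$, so $\mu_0|_{K_1} \in \Pi_0 \setminus \{0\}$.

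The main obstacle is showing $I_0 \neq \{0\}$, which I plan to accomplish by proving $\mu_0|_{K_2} \in I_0$ through a strong-independence argument. Decomposing $\mu_0|_{K_2} = \alpha + \beta$ with $\alpha \in \Pi$ and $\beta \in I$, the $L$-space reasoning above forces $\alpha$ to be supported in $K_2$. Expanding $\alpha$ along the direct sum $\Pi = \bigoplus_{g \in G,\, n \in \mathbb{N}} \delta_g * B_n$ yields a countable sum $\alpha = \sum_i \alpha_i$ of mutually singular continuous measures with $\supp(\alpha_i) \subseteq K_2 \cap (g_i + n_i P)$. The geometric heart of the argument is the claim that $|K_2 \cap (g + nP)| \leq 1$ for every $g \in G$ and $n \in \mathbb{N}$: if distinct $x, y \in K_2$ satisfied $x = g + p_1 + \cdots + p_n$ and $y = g + p_1' + \cdots + p_n'$ with $p_i, p_i' \in P$, then combining like terms in $x - y + \sum_i p_i' - \sum_i p_i = 0$ produces a nontrivial $\mathbb{Z}$-linear relation among distinct elements of $Q$ in which $x$ appears with coefficient $1$ (note that $p_i, p_i' \in P \subseteq K_1$ are distinct from $x, y \in K_2$ because $K_1 \cap K_2 = \emptyset$). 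This contradicts the strong independence of $Q$: since $Q$ is perfect and nonempty we have $k(Q) \geq 2$ or $k(Q) = \infty$, so the coefficient $1$ of $x$ can be neither a multiple of $k(Q)$ nor equal to $0$. Each $\alpha_i$ is therefore a continuous measure supported on at most one point, hence zero, whence $\alpha = 0$ and $\mu_0|_{K_2} \in I \cap M_0(G) = I_0$.
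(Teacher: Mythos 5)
Your proof is correct, and while its first half coincides with the paper's, the key nontriviality step is handled by a genuinely different argument. Restricting the decomposition $M_c(G)=\Pi\oplus I$ to $M_0(G)$ via the $L$-space property, and deducing $\Pi_0\neq\{0\}$ from $M_0(P)\subseteq\Pi_0$, is exactly what the paper does. For $I_0\neq\{0\}$, however, the paper takes a nonzero $\mu\in M_0^+(G)$ and a nonzero positive $\nu\in I$ (whose existence it merely asserts, leaning on Varopoulos's decomposition), verifies $\mu*\nu(E+F)\geq\mu(E)\nu(F)>0$, and uses that $I$ and $M_0(G)$ are both ideals of $M(G)$ to place $\mu*\nu$ in $I_0$. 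You instead reserve a second piece $K_2$ of the independent set and exhibit $\mu_0|_{K_2}$ as an explicit nonzero element of $I_0$: the relation $x-y-\sum_i p_i+\sum_i p_i'=0$ gives, after reduction, a coefficient $1$ on $x\in K_2$ among distinct elements of $Q$, which is impossible since $k(Q)\geq 2$ or $k(Q)=\infty$ for a perfect set; hence $|K_2\cap(g+nP)|\leq 1$ and every $\Pi$-component of $\mu_0|_{K_2}$ is a continuous measure carried by at most one point, so it vanishes. Your route is somewhat longer but more self-contained — it does not presuppose $I\neq\{0\}$ and in fact proves it — and your splitting of the independent set into $K_1$ and $K_2$ anticipates exactly the device the paper deploys later in the proofs of Proposition \ref{prop:disc} and Theorem \ref{thm:pt-derivation-Abelian}. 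The geometric claim you prove is also essentially the same mechanism the paper uses there (via the set $R_n$) to separate blocks built from $K_1$ and $K_2$.
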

\begin{proof}
Since $G$ is a non-discrete metrizable locally compact Abelian group, Theorem \ref{thm:varop2} guarantees  a perfect metrizable strongly independent subset $P$ of $G$ such that $M_0(P)\neq\{0\}$.
Let $\mu$ be an element of $M_0(G)$. Since $M_0(G)$ is a subset of $M_c(G)$, we can  decompose $\mu$ into an orthogonal sum
$$\mu=\mu_1+\mu_2,$$
with $\mu_1\in\Pi$ and $\mu_2\in I$.
Note that $|\mu_1|\ll |\mu|$ and $|\mu_2|\ll|\mu|$. Therefore $\mu_1$ and $\mu_2$ belong to $M_0(G)$, since  $M_0(G)$ is an $L$-space containing $|\mu|$. Thus, $M_0(G)=\Pi_0\oplus I_0$.

We now need to show that the decomposition is nontrivial. First note that $\Pi_0\neq\{0\}$, since 
 $\{0\}\neq M_0(P)\subseteq \Pi_0$.
 To show $I_0\neq \{0\}$, let $\mu\in M_0^+(G)$, and pick a positive measure $\nu\in I$. (Note that such measures exist, as $M_0(G)$ and $I$ are non-trivial $L$-spaces).
 Let $E$ and $F$ be compact subspaces of $G$ such that $\mu(E)>0$ and $\nu(F)>0$.  Then, 
  \begin{eqnarray*}
  \mu *\nu(E+F)&=&\int_G\int_G\chi_{E+F}(x+y)d\mu(x)d\nu(y)\\ &\geq& \int_G\int_G\chi_{E}(x)\chi_F(y)d\mu(x)d\nu(y)=\mu(E)\nu(F)>0.
  \end{eqnarray*}
This finishes the  proof, as $I_0$ contains $\mu*\nu$.
\end{proof}
%
\paragraph{Derivatives on Banach algebras}
Let ${\cal A}$ be a Banach algebra, and $X$ be a Banach space.  The space $X$ is a Banach ${\cal A}$-bimodule if it is an ${\cal A}$-bimodule
whose module actions are  continuous, {\it i.e.} there exists a positive constant $K$ such that
$$\|a\cdot x\|\leq K\|a\|\|x\|\mbox{ and }\|x\cdot a\|\leq K\|x\|\|a\|,$$
for every $x$ in $X$ and $a$ in ${\cal A}$.
A bounded linear map $D$ from ${\cal A}$ to an ${\cal A}$-bimodule $X$ is called a \emph{derivation} if for all $a$ and $b$ in ${\cal A}$,
$$D(ab)=D(a)\cdot b+a\cdot D(b).$$
Let ${\cal A}$ be a commutative Banach algebra, and $\phi$ be a character on ${\cal A}$, {\it i.e.} an algebra homomorphism  from ${\cal A}$ into ${\mathbb C}$. Then, one can turn ${\mathbb C}$ into 
a Banach ${\cal A}$-bimodule, which will be denoted by ${\mathbb C}_\phi$, using the following natural left and right module actions:
$$a\cdot \lambda=\phi(a)\lambda=\lambda\cdot a.$$
A derivation $D$  from ${\cal A}$ to ${\mathbb C}_\phi$ is called a \emph{point derivation}. The following hereditary property will allow us to drop the condition of ``metrizability'' for our main theorem.
\begin{theorem}\label{thm:heredit}{\bf (Hereditary properties)}
Let ${\cal A}$ and ${\cal B}$ be Banach algebras. Let $\alpha$ be a surjective homomorphism from ${\cal A}$ to ${\cal B}$, and $X$ be a Banach ${\cal B}$-bimodule. 
If $D:{\cal B}\rightarrow X$ is a derivation, then
\begin{itemize}
\item[(i)] $X$ is a Banach ${\cal A}$-bimodule, when equipped with the module actions
$$a\cdot_{\cal A} x:=\alpha(a)\cdot x, \ x\cdot_{\cal A} a:=x\cdot \alpha(a), \mbox{ for every } a\in {\cal A}, x\in X.$$
\item[(ii)] The composition map $D\circ\alpha$ is a derivation of ${\cal A}$ into the ${\cal A}$-module $X$.%
\end{itemize}
\end{theorem}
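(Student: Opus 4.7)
The proof is essentially a routine verification; I would split it cleanly into the two parts claimed and mostly chase the definitions, using only that $\alpha$ is a (continuous) homomorphism and that $X$ is a Banach ${\cal B}$-bimodule. Note that since the statement is about Banach algebras and the conclusion involves Banach bimodule structure, one should either assume or observe that $\alpha$ is bounded; in the application the homomorphism is going to be continuous by construction, so I will simply invoke this.

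For part (i), the plan is to verify the bimodule axioms one at a time. Bilinearity of $a \mapsto \alpha(a)\cdot x$ and $a \mapsto x \cdot \alpha(a)$ is immediate from linearity of $\alpha$ and bilinearity of the ${\cal B}$-module actions. For associativity on the left, I would compute
\[
(ab)\cdot_{\cal A} x = \alpha(ab)\cdot x = (\alpha(a)\alpha(b))\cdot x = \alpha(a)\cdot(\alpha(b)\cdot x) = a\cdot_{\cal A}(b\cdot_{\cal A} x),
\]
using that $\alpha$ is multiplicative and that $X$ is a left ${\cal B}$-module; the right-module identity and the two-sided compatibility $(a\cdot_{\cal A} x)\cdot_{\cal A} b = a\cdot_{\cal A}(x\cdot_{\cal A} b)$ follow by the same pattern. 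For continuity, if $K$ is a constant controlling the ${\cal B}$-module actions on $X$, then
\[
\|a\cdot_{\cal A} x\| = \|\alpha(a)\cdot x\| \leq K\|\alpha(a)\|\|x\| \leq K\|\alpha\|\|a\|\|x\|,
\]
and similarly on the right, so the constant $K\|\alpha\|$ witnesses that $X$ is a Banach ${\cal A}$-bimodule.

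For part (ii), boundedness of $D\circ\alpha$ is immediate: $\|D(\alpha(a))\| \leq \|D\|\|\alpha\|\|a\|$. For the Leibniz rule I would simply chain the relevant identities: for $a,b\in{\cal A}$,
\[
(D\circ\alpha)(ab) = D(\alpha(a)\alpha(b)) = D(\alpha(a))\cdot \alpha(b) + \alpha(a)\cdot D(\alpha(b)),
\]
and then rewriting the right-hand side in terms of the ${\cal A}$-actions defined in (i) gives
\[
(D\circ\alpha)(ab) = (D\circ\alpha)(a)\cdot_{\cal A} b + a\cdot_{\cal A}(D\circ\alpha)(b),
\]
which is exactly the derivation identity for $D\circ\alpha : {\cal A} \to X$.

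There is really no serious obstacle here; the only subtlety worth flagging is the implicit hypothesis that $\alpha$ be bounded, which is needed both for the continuity of the new module actions in (i) and for the boundedness of $D\circ\alpha$ in (ii). In the intended application of this theorem (dropping the metrizability hypothesis by lifting a point derivation through a quotient-type map), that continuity will be automatic, so the result applies without further comment.
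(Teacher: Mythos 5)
Your verification is correct and is exactly the routine definition-chase the paper intends: the paper omits the proof entirely, remarking only that it is ``very easy to prove'' and citing Helemskii's book. Your flag that $\alpha$ must be bounded for the Banach-bimodule structure in (i) and the boundedness of $D\circ\alpha$ in (ii) is a legitimate point the statement glosses over, and as you note it is harmless in the intended applications, where the maps $\check{p}:M_0(G)\rightarrow M_0(G/H)$ and $r:B_0(G)\rightarrow B_0(H)$ are contractive.
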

\begin{corollary}\label{cor-lift}
Let $\alpha$ be a surjective homomorphism from ${\cal A}$ to ${\cal B}$, where ${\cal A}$ and ${\cal B}$ are commutative Banach algebras. . If ${\cal B}$ admits a nonzero point derivation, then ${\cal A}$ will admits such a derivation as well. 
\end{corollary}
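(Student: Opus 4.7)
The plan is to apply Theorem \ref{thm:heredit} directly to a given nonzero point derivation on $\mathcal{B}$ and verify that the resulting derivation on $\mathcal{A}$ is again a point derivation (in particular, that the target one-dimensional module corresponds to a genuine character of $\mathcal{A}$), and that nonzeroness is preserved.

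Concretely, suppose $D : \mathcal{B} \to \mathbb{C}_{\phi}$ is a nonzero continuous point derivation at a character $\phi$ of $\mathcal{B}$. First I would set $\psi := \phi \circ \alpha : \mathcal{A} \to \mathbb{C}$; this is a composition of continuous algebra homomorphisms, hence a continuous algebra homomorphism of $\mathcal{A}$ into $\mathbb{C}$. Since $\alpha$ is surjective and $\phi \neq 0$, the map $\psi$ is nonzero, so $\psi$ is a character of $\mathcal{A}$. Next I would unwind the $\mathcal{A}$-bimodule structure that Theorem \ref{thm:heredit}(i) places on $\mathbb{C}_{\phi}$: for $a \in \mathcal{A}$ and $\lambda \in \mathbb{C}$,
\[
a \cdot_{\mathcal{A}} \lambda = \alpha(a) \cdot \lambda = \phi(\alpha(a))\lambda = \psi(a)\lambda,
\]
and symmetrically on the right. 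Thus $\mathbb{C}_{\phi}$ with the lifted actions is exactly the one-dimensional $\mathcal{A}$-bimodule $\mathbb{C}_{\psi}$.

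Then Theorem \ref{thm:heredit}(ii) gives that $\widetilde{D} := D \circ \alpha : \mathcal{A} \to \mathbb{C}_{\psi}$ is a continuous derivation, which is by definition a point derivation of $\mathcal{A}$ at $\psi$. The only remaining point is that $\widetilde{D}$ is nonzero: pick $b \in \mathcal{B}$ with $D(b) \neq 0$, and by surjectivity of $\alpha$ choose $a \in \mathcal{A}$ with $\alpha(a) = b$; then $\widetilde{D}(a) = D(b) \neq 0$.

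There is no substantive obstacle here; the corollary is a bookkeeping consequence of Theorem \ref{thm:heredit}, with the only small verification being that the lifted module structure on $\mathbb{C}$ is governed by the character $\phi \circ \alpha$ rather than by some more exotic module action, and that surjectivity of $\alpha$ ensures both that $\phi \circ \alpha$ is a character and that nonzeroness of $D$ transfers to $D \circ \alpha$.
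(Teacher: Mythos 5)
Your proof is correct and is exactly the intended argument: the paper offers no separate proof of this corollary, treating it as an immediate consequence of Theorem \ref{thm:heredit}, and your verification that the lifted module is $\mathbb{C}_{\phi\circ\alpha}$ and that surjectivity preserves both the character property and nonzeroness is precisely the bookkeeping the paper leaves to the reader.
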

The above theorem is very easy to prove. We refer the reader to \cite{helemski-book} for a comprehensive treatment of derivations on Banach algebras. 
\section{Analytic discs in the spectrum of $M_0(G)$}\label{section:analytic-disc}
Let $G$ be a non-discrete locally compact Abelian group.
It is known that the maximal ideal space of $L^1(G)$ can be identified with $\widehat{G}$, the character group of $G$. In analogy with this result, Taylor  described the maximal ideal  space of $M_0(G)$ as the set $\widehat{S}$ of all semicharacters of a certain compact topological semigroup $S$ (see  \cite{J.Taylor}).  A \emph{semicharacter} of a topological semigroup $S$ is a nonzero continuous function of norm at most 1  that
satisfies
$f(st)=f(s)f(t)$
for every $s,t\in S$. We use the dual pairing $\langle s,\phi\rangle$ to denote the action of $\phi\in \widehat{S}$ on $s\in S$. In its general form, Taylor's result describes the spectrum of  any \emph{convolution measure algebra}, {\it {\it {i.e.}}} a closed subalgebra of $M(G)$ which is also an $L$-space.
\begin{theorem}\cite{J.Taylor}\label{taylor-semigroup}
Let ${\cal M}$ be a commutative convolution measure algebra with maximal ideal space $\Delta$.
Then there exists a compact Abelian topological semigroup $S$ and a map
$$\iota: \widehat{S}\rightarrow\Delta$$
 such that $\iota$ is a bijection, and $\widehat{S}$ separates the points of $S$.
\end{theorem}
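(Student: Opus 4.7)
The plan is to construct $S$ as the \emph{structure semigroup} of $\mathcal{M}$, mirroring Taylor's original approach. Since $\mathcal{M}$ is an $L$-subspace of $M(G)$, it is an abstract $L^1$-space in the sense of Kakutani (a Banach lattice with additive norm on the positive cone), and by Kakutani's representation theorem its dual $\mathcal{M}^*$ is isometrically $*$-isomorphic to $C(\Omega)$ for some hyperstonean compact Hausdorff space $\Omega$. This puts at our disposal a commutative unital $C^*$-algebra canonically associated to $\mathcal{M}$, together with an isometric embedding of $\mathcal{M}$ into the predual of $C(\Omega)$.

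The semigroup $S$ is then carved out of the weak-$*$-compact unit ball $\mathcal{B}$ of $\mathcal{M}^*$. Concretely, I would take $S$ to be the weak-$*$ closure of the collection of contractive multiplicative functionals $f \in \mathcal{B}$ satisfying
$$\langle \mu * \nu, f\rangle = \langle \mu, f\rangle \langle \nu, f\rangle, \qquad \mu, \nu \in \mathcal{M}.$$
Since the defining identity is weak-$*$ closed, $S$ is a weak-$*$-compact subset of $\mathcal{B}$. The convolution on $\mathcal{M}$ dualizes to a comultiplication $C(\Omega) \to C(\Omega \times \Omega)$, whose adjoint yields a binary operation on $S$. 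One verifies that this operation takes values in $S$, is jointly weak-$*$ continuous, and inherits commutativity from $\mathcal{M}$. The canonical map $\mathcal{M} \to M(S)$ sending $\mu$ to its evaluation-integral is then an $L$-space homomorphism onto a sub-$L$-space of $M(S)$.

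The bijection $\iota: \widehat{S} \to \Delta$ would be defined by
$$\iota(\phi)(\mu) = \int_S \phi\, d\widetilde{\mu}, \qquad \phi \in \widehat{S},\ \mu \in \mathcal{M},$$
where $\widetilde{\mu}$ is the image of $\mu$ in $M(S)$. Multiplicativity of $\iota(\phi)$ uses Fubini together with the semicharacter identity $\phi(st) = \phi(s)\phi(t)$, while continuity and boundedness follow from $\|\phi\|_\infty \leq 1$. For injectivity, I would show that $\widehat{S}$ separates points of $S$: any two distinct $s, t \in S$ differ in their pairing against some $\mu \in \mathcal{M}$, and this difference is detected by semicharacters because $S$ arose as a weak-$*$ closure of honest characters. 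For surjectivity, each $\chi \in \Delta$ is extended via the universal property of $S$ to a semicharacter, recovering $\chi$ upon integration.

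The principal technical obstacle lies in pinning down the construction of $S$ so that its semigroup operation extends the convolution on $\mathcal{M}$ in a weak-$*$ continuous way and simultaneously possesses the universal property needed in the last step. A related delicate point is verifying that distinct elements of $S$ are genuinely separated by semicharacters in $\widehat{S}$, rather than merely by the pairings with elements of $\mathcal{M}$ that were used to build $S$. Both points hinge on a careful analysis of the projective tensor product $\mathcal{M}\,\widehat{\otimes}\,\mathcal{M}$ and the way convolution factors through it as an $L$-space morphism.
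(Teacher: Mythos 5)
The paper does not prove this statement; it is quoted verbatim from Taylor's 1965 paper, so there is no in-paper argument to match your sketch against. Judged on its own terms, your proposal has the right raw ingredients (the Kakutani/hyperstonean representation $\mathcal{M}^*\cong C(\Omega)$, the multiplicative elements of the unit ball of $\mathcal{M}^*$, the dualized convolution), but it contains a structural confusion that would sink the argument: you define $S$ to be the (weak-$*$ closed) set of contractive multiplicative functionals on $\mathcal{M}$. The nonzero such functionals are exactly the points of $\Delta$, so your $S$ is essentially $\Delta\cup\{0\}$, and the theorem would then be asking for a bijection between $\Delta$ and the semicharacters of $\Delta\cup\{0\}$ --- a ``second dual'' of the wrong kind. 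Already for $\mathcal{M}=L^1(G)$ this fails: the contractive multiplicative functionals form $\widehat{G}\cup\{0\}$, whose continuous semicharacters are governed by Pontryagin duality and do not recover $\widehat{G}=\Delta$. In Taylor's construction the multiplicative elements of $\mathcal{M}^*$ are not the points of $S$; they \emph{become} the semicharacters $\widehat{S}$. The structure semigroup $S$ itself is the Gelfand spectrum of the $C^*$-subalgebra of $\mathcal{M}^*=C(\Omega)$ generated by these multiplicative elements (and their conjugates), and the semigroup operation on $S$ is arranged precisely so that each generating element, viewed as a function on $S$, is multiplicative. This is also what makes the clause ``$\widehat{S}$ separates the points of $S$'' automatic, whereas with your $S$ it is an unverified (and generally false) claim.

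A second, related gap is that you never isolate the key lemma that makes the whole construction possible: the product, taken inside the commutative von Neumann algebra $\mathcal{M}^*\cong C(\Omega)$, of two contractive multiplicative elements is again multiplicative. This is where the $L$-space hypothesis on $\mathcal{M}$ is genuinely used (convolution factors through the $L$-space tensor product, not the projective tensor product, which is also why your claimed comultiplication $C(\Omega)\to C(\Omega\times\Omega)$ needs justification --- the dual of $\mathcal{M}\,\widehat{\otimes}\,\mathcal{M}$ is $B(\mathcal{M},\mathcal{M}^*)$, not $C(\Omega\times\Omega)$). You flag this as ``the principal technical obstacle,'' which is honest, but without it there is no $C^*$-algebra to take the spectrum of, no semigroup operation on $S$, and no surjectivity of $\iota$. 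To repair the sketch: keep the Kakutani representation, prove closure of the multiplicative elements under the $C(\Omega)$-product, let $S$ be the spectrum of the $C^*$-algebra they generate, and only then identify $\Delta$ with $\widehat{S}$.
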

\begin{definition}
The semigroup $S$ introduced in  Theorem \ref{taylor-semigroup} is called the structure semigroup of ${\cal M}$.
\end{definition}
For the rest of this article, $S$ denotes the structure semigroup of  a convolution measure algebra ${\cal M}$ with maximal ideal space $\Delta$. Every element of $\widehat{S}$ admits a polar decomposition as in Lemma \ref{lem:polar}. To obtain this decomposition,  define
$$H=\left\{f\in \widehat{S}: |f(s)|=0 \mbox{ or }1 \mbox{ for all } s\in S\right\}.$$ 
For every $f\in \widehat{S}$, let $\supp(f):=\overline{\{s\in S: f(s)\neq 0\}}$. The support of $f$ is an {open} compact sub-semigroup of $S$ whose complement is an ideal (see Lemma 3.2 and its corollary in \cite{J.Taylor}). We can now state the polar decomposition in $\widehat{S}$.

\begin{lemma}[Lemma 3.3 of \cite{J.Taylor}] \label{lem:polar}
If  $f\in \widehat{S}$, then $|f|\in \widehat{S}$. Moreover, there exists a unique $h\in H$ such that $f=|f|h$ and $\supp(f)=\supp(h)$.
\end{lemma}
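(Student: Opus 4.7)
First, I would verify that $|f| \in \widehat{S}$ by direct inspection: the identity $|f|(st) = |f(s)f(t)| = |f(s)||f(t)|$ transfers semicharacter multiplicativity from $f$, and continuity, the bound $\sup_{s\in S} |f(s)| \le 1$, and non-vanishing are inherited from $f$.

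For the polar part, the natural candidate is the map $h\colon S \to \mathbb{C}$ defined by $h(s) := f(s)/|f(s)|$ when $f(s) \ne 0$ and $h(s) := 0$ when $f(s) = 0$. Once this is set up, the assertions $|h(s)| \in \{0,1\}$ for all $s$, the factorization $f = |f|\,h$, and the equality of sets $\{h \ne 0\} = \{f \ne 0\}$ (whence $\supp(h) = \supp(f)$) are immediate from the definitions. Multiplicativity $h(st) = h(s)h(t)$ splits into two cases: when $f(s), f(t)$ are both nonzero, $f(st) = f(s)f(t) \ne 0$ and a direct ratio computation gives the identity; when at least one factor vanishes, $f(st) = 0$ forces $h(st) = 0 = h(s)h(t)$.

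The crux of the argument is continuity of $h$. The potential obstruction is a point $s \in \supp(f) = \overline{\{f \ne 0\}}$ at which $f(s) = 0$: a net in $\{f \ne 0\}$ converging to such an $s$ would carry $|h|$-values equal to $1$, contradicting $h(s) = 0$. To rule this out I would invoke the cited structural result of \cite{J.Taylor} (Lemma 3.2 and its corollary), which says that $\supp(f)$ is an open compact sub-semigroup whose complement is an ideal. Since $\supp(f)$ is by construction closed, openness makes it clopen; combined with the fact that $f$ vanishes on the complementary ideal, I would use this to conclude that $\{s\colon f(s)\ne 0\}$ is itself clopen and in fact equal to $\supp(f)$. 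Granted this, $h$ is the continuous quotient $f/|f|$ on the clopen set $\{f \ne 0\}$, where the denominator does not vanish, and identically zero on the open complement $S\setminus \supp(f)$; hence $h$ is continuous on $S$, and therefore $h \in H$.

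Finally, uniqueness is a brief closing step: if $h' \in H$ also satisfies $f = |f|\,h'$ and $\supp(h') = \supp(f)$, then on $\{f \ne 0\}$ the factorization forces $h'(s) = f(s)/|f(s)| = h(s)$, while on $S\setminus \supp(f) \subseteq \{f = 0\}$ the support condition combined with $h' \in H$ gives $h'(s) = 0 = h(s)$.
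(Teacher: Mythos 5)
Your verification that $|f|\in\widehat{S}$ is fine, as is the algebra of the factorization on the set $\{f\neq 0\}$. The genuine gap is exactly where you locate the crux: continuity of $h$. Your argument rests on the claim that $\{s\in S: f(s)\neq 0\}$ equals $\supp(f)$, which you try to deduce from the facts that $\supp(f)$ is clopen and that $f$ vanishes on the complementary ideal. That deduction is a non sequitur: $f$ vanishes off $\supp(f)$ by the very definition of support, and nothing in the quoted structural result prevents $f$ from also vanishing at some points \emph{inside} $\supp(f)$ --- the support is the \emph{closure} of $\{f\neq 0\}$, which need not coincide with $\{f\neq 0\}$ when that set fails to be closed. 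This genuinely happens: take $S=[0,1]$ under ordinary multiplication and $f(s)=s$. Then $f$ is a semicharacter, $\supp(f)=S$ is clopen and its complement $\emptyset$ is (vacuously) an ideal, yet $f(0)=0$ with $0\in\supp(f)$. Your candidate $h$ (equal to $f/|f|$ where $f\neq 0$ and to $0$ elsewhere) is discontinuous at $0$, whereas the correct choice is $h\equiv 1$. So the real content of the lemma is that $f/|f|$, defined on the dense subset $\{f\neq 0\}$ of the clopen sub-semigroup $\supp(f)$, admits a continuous unimodular multiplicative extension to all of $\supp(f)$; your construction does not produce that extension, and your continuity argument fails at every point of $\supp(f)\setminus\{f\neq 0\}$.

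Note that the paper itself does not reprove this statement --- it quotes Lemma 3.3 of Taylor verbatim --- so the burden of the missing step falls entirely on Taylor's limiting argument. One way to see what is needed: the functions $f\,|f|^{\epsilon-1}$ for $0<\epsilon\le 1$ are semicharacters (they are continuous even where $f$ vanishes, since their modulus is $|f|^{\epsilon}$), and one extracts $h$ as a cluster point as $\epsilon\to 0^{+}$ in the compact space $\Delta\cup\{0\}$, then verifies $h\in H$, $f=|f|h$ and $\supp(h)=\supp(f)$; the compactness of the spectrum is what supplies the continuous extension that your pointwise formula cannot. Your uniqueness step has the same blind spot: it determines $h'$ on $\{f\neq 0\}$ and on $S\setminus\supp(f)$, but says nothing at points of $\supp(f)$ where $f=0$; there one must instead invoke continuity of $h'$ together with density of $\{f\neq 0\}$ in $\supp(f)$ (an easy repair, but one that your write-up sidesteps only because of the erroneous identification of $\{f\neq 0\}$ with $\supp(f)$).
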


Let ${\cal M}$ be a convolution measure algebra with maximal ideal space $\Delta$ and structure semigroup $S$. An \emph{analytic disc} in the maximal ideal space $\Delta$ is an injection $\psi$ of the open unit
disc of ${\mathbb C}$ into $\Delta$ such that $e({\mu})\circ\psi$ is holomorphic for every $\mu$ in ${\cal M}$. Here, $e(\mu)$ denotes the function which evaluates every element of $\Delta$ at $\mu$.
It is easy to see that for a semicharacter  $\phi$, if  $|\phi|$ is not an idempotent then there exists an analytic disc around $\phi$. Indeed, let $\phi=|\phi|h_\phi$ be the polar decomposition of $\phi$. Then the map $z\mapsto |\phi|^zh_\phi$ is a vector-valued analytic map from $\{z\in {\mathbb C}: \Re{z}>0\}$ to $\widehat{S}$. We recall the following result from \cite{Brown-Moran-disc}, and present a more detailed proof.
\begin{corollary}
Let ${\cal M}$ be a convolution measure algebra with structure semigroup $S$.
Let $\phi$ be an element of $\widehat{S}$ such that $|\phi|$ is not an idempotent. Then ${\cal M}$ admits a point derivation at $\phi$.
\end{corollary}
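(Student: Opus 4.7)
The plan is to differentiate along the analytic curve $\psi(z) = |\phi|^z h_\phi$ introduced just before the corollary, which is a vector-valued holomorphic map from the right half-plane $\{z \in \mathbb{C} : \Re z > 0\}$ into $\widehat{S}$ with $\psi(1) = \phi$. I will define $D : \mathcal{M} \to \mathbb{C}$ by
$$D\mu \;:=\; \left.\frac{d}{dz}\right|_{z=1} \langle \mu, \psi(z) \rangle.$$
Linearity is immediate. For continuity, note that on any circle of radius $r < 1$ centered at $z = 1$, every point satisfies $\Re z \geq 1 - r > 0$ and hence $\|\psi(z)\|_\infty \leq 1$ (since $|\phi|, |h_\phi| \leq 1$); Cauchy's integral formula then gives $|D\mu| \leq r^{-1}\|\mu\|$, so $D$ is bounded.

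To verify the derivation identity, I would use that each $\psi(z) \in \widehat{S}$ is multiplicative on $\mathcal{M}$, so
$$\langle \mu * \nu, \psi(z) \rangle \;=\; \langle \mu, \psi(z) \rangle \cdot \langle \nu, \psi(z) \rangle.$$
Differentiating at $z = 1$ via the product rule yields
$$D(\mu * \nu) \;=\; D\mu \cdot \phi(\nu) + \phi(\mu) \cdot D\nu,$$
which is precisely the derivation identity with respect to the $\mathcal{M}$-bimodule structure on $\mathbb{C}_\phi$.

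The main step is to show $D \not\equiv 0$. Writing $\mu$ as its image $\theta(\mu) \in M(S)$ under Taylor's canonical $L$-embedding, formal differentiation under the integral sign gives
$$D\mu \;=\; \int_S g(s) \, d\theta(\mu)(s), \qquad g(s) := \phi(s) \log|\phi(s)|,$$
extended by $0$ on $\{\phi = 0\}$; this $g$ is continuous on $S$ because $t \mapsto t \log t$ extends continuously to $0$. The function $g$ cannot be identically zero: if it were, then $|\phi(s)| \in \{0,1\}$ for every $s \in S$, which would make $|\phi|$ equal to the characteristic function of $\supp(\phi)$ and hence an idempotent in $\widehat{S}$, contradicting the hypothesis on $\phi$. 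The chief obstacle is now to promote this pointwise nonvanishing to the existence of some $\mu \in \mathcal{M}$ with $\int g \, d\theta(\mu) \neq 0$. I would resolve this by invoking a structural feature of Taylor's construction: $S$ is built as a canonical compactification of data derived from $\mathcal{M}$, so the union of the supports of measures in $\theta(\mathcal{M})$ is dense in $S$. Combined with the $L$-subspace property of $\theta(\mathcal{M})$ (which allows one to truncate by bounded Borel functions and stay inside $\theta(\mathcal{M})$), this forces any continuous function that pairs to zero with every element of $\theta(\mathcal{M})$ to vanish on $S$, contradicting $g \not\equiv 0$. Hence $D$ is a nonzero continuous point derivation at $\phi$.
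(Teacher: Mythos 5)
Your proposal is correct and follows essentially the same route as the paper: differentiate $z\mapsto\langle\mu,|\phi|^z h_\phi\rangle$ at $z=1$, check boundedness and the Leibniz identity from multiplicativity of each $|\phi|^z h_\phi$, and deduce nonvanishing from the Gelfand/Taylor representation. You actually supply more detail than the paper does on the key point that $D\neq 0$ (via the continuous function $g(s)=\phi(s)\log|\phi(s)|$ and the density of supports in the structure semigroup), which is a welcome elaboration rather than a deviation.
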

\begin{proof}
Note that for each $\mu$ in ${\cal M}$, the map $z\mapsto \langle\mu, |\phi|^zh_\phi\rangle$  is a complex-valued  analytic function on $\{z\in {\mathbb C}: \Re{z}>0\}$. We then define
$$D:{\cal M}\rightarrow {\mathbb C},\quad D(\mu)=\frac{d}{dz}(\langle\mu, |\phi|^zh_\phi\rangle)|_{z=1}.$$
It is easy to check that $D$ is a continuous point derivation on ${\cal M}$ at the character $\phi$. Moreover, using the polynomial expansion of $z\mapsto |\phi|^zh_\phi$ around $z=1$ and the Gelfand representation of ${\cal M}$, we see that $D$ is nonzero.
\end{proof}

Taking the above discussion into account, constructing $\phi$-derivations is a non-trivial (and in most cases even challenging) task, only when $|\phi|$ is an idempotent. For the special case of $M(G)$ and the discrete augmentation character $h$, Brown and Moran have constructed nontrivial continuous point derivations at $h$ (see  \cite{Brown-Moran-pt-der}). Later on, they used a method of Varopoulos to construct analytic discs around $h$ in the maximal
ideal space of $M(G)$.
Inspired by those results, we will obtain parallel constructions for the algebra of Rajchman measures. Especially, we construct analytic discs around certain idempotent characters of $M_0(G)$.

We use the following scheme, which is inspired by the method  Brown and Moran have developed for
the case of measure algebras \cite{Brown-Moran-disc}.
Let $M_0(G)=I\oplus A$ be a decomposition of $M_0(G)$ where $I$ is an $L$-ideal and $A$ is an $L$-subalgebra. Clearly
\begin{equation}\label{idemp}
h(\mu)=\left\{\begin{array}{cc}
                  0 & \mu\in I \\
                  \mu(G) & \mu\in A
                \end{array}\right.
\end{equation}
 is a character on $M_0(G)$, since $I$ is an ideal of $M_0(G)$.
Suppose that there exist mutually orthogonal $L$-subspaces $A=B_0, B_1, B_2,\ldots$ of $M_0(G)$ such that
\begin{eqnarray}
&({\rm i}) & B_1\neq \{0\},\nonumber\\
&({\rm ii})& \mbox{If  }\mu\in B_n \mbox{ and } \nu\in B_m \mbox{ then } \mu*\nu\in B_{m+n}  \mbox{ for all positive integers }m,n,\label{eq-rakov}\\
&({\rm iii})& (\oplus_{n=0}^\infty B_n)^\perp \mbox{ is an }\mbox{$L$-ideal of } M_0(G).\nonumber
\end{eqnarray}
For $z$ in ${\mathbb D}$ and $\mu$ in $M_0(G)$, define
\begin{equation}\label{analytic-disc}
\langle\mu,\phi(z)\rangle=\left\{\begin{array}{cc}
                       \int_G z^n d\mu & \mu\in B_n\\
                       0  & \mu\in (\oplus_{n=0}^\infty B_n)^\perp
                     \end{array}
                     \right.,
                     \end{equation}
where we use the convention $0^0=1$.
One can easily verify that $\phi(z)$ is an element of the maximal ideal space of $M_0(G)$, and $\phi(0)=h$.
Hence $\phi$ is an analytic disc around $h$.

\begin{proposition}\label{prop:disc}
For every non-discrete metrizable locally compact Abelian group $G$,  the maximal ideal space of $M_0(G)$ contains an analytic disc.
\end{proposition}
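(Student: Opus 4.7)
Plan: I will apply the scheme described in equations \eqref{idemp}--\eqref{analytic-disc}, with the decomposition coming from Theorem \ref{thm:decomposition-M_0} and the filtration built from Varopoulos's blocks intersected with $M_0(G)$.

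First, I invoke Lemma \ref{lem:compact-strongly-indep-M_0nonzero} to fix a compact perfect strongly independent set $P \subseteq G$ with $M_0(P) \neq \{0\}$, and Theorem \ref{thm:decomposition-M_0} to split $M_0(G) = \Pi_0 \oplus I_0$. I set $A = \Pi_0$ and $I = I_0$ in the scheme, obtaining the idempotent character $h$ of \eqref{idemp} which equals the total-mass functional on the $\Pi_0$-component and vanishes on $I_0$. For the filtration, I define, for each $n \geq 1$,
\[
B_n = \overline{\bigoplus_{g \in G} (\delta_g * B_n^V)\cap M_0(G)}^{L\text{-space}},
\]
where $B_n^V$ denotes Varopoulos's $L$-space of $n$-fold products of $M_c(P)$, together with $B_0 = A = \Pi_0$ interpreted via the internal decomposition of $\Pi_0$.

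Next, I verify the three conditions of the scheme: condition (i), $B_1 \neq \{0\}$, is immediate because $M_0(P) \subseteq B_1$; condition (ii), $B_n * B_m \subseteq B_{n+m}$ for positive $n, m$, follows from the corresponding inclusion $B_n^V * B_m^V \subseteq B_{n+m}^V$ in Theorem \ref{thm:decomposition-M} combined with the fact that $M_0(G)$ is an $L$-ideal of $M(G)$; and condition (iii), that $(\bigoplus_{n \geq 0} B_n)^{\perp}$ is an $L$-ideal of $M_0(G)$, reduces to identifying this orthogonal complement with $I_0$ and invoking Theorem \ref{thm:decomposition-M_0}. The mutual orthogonality of the $B_n$'s for $n \geq 1$ is inherited from the orthogonality of the $\delta_g * B_n^V$ blocks under Varopoulos's decomposition.

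Once these are established, the formula \eqref{analytic-disc} yields the family $\phi(z)$. I check that $\phi \colon \mathbb{D} \to \Delta$ is an analytic disc: multiplicativity is a Cauchy-product computation using $B_n * B_m \subseteq B_{n+m}$ together with $(\mu * \nu)(G) = \mu(G)\nu(G)$, where the residual $I_0$-contributions vanish because $I_0$ is an ideal; analyticity in $z$ is clear from the polynomial form of the formula; the estimate $\|\phi(z)\| \leq 1$ follows from $|z|^n \leq 1$ coupled with the decomposition $\|\mu\| = \sum_n \|\mu_n\| + \|\mu_\perp\|$ afforded by mutual singularity; and injectivity of $z \mapsto \phi(z)$ is verified by evaluating at a fixed positive $\mu \in M_0(P) \subseteq B_1$ with $\mu(G) > 0$, whereupon $\phi(z)(\mu) = z\mu(G)$ uniquely determines $z$. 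The identity $\phi(0) = h$ is then read off directly from the convention $0^0 = 1$.

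The main technical subtlety is reconciling the choice $A = B_0 = \Pi_0$ with the fact that each $B_n$ ($n \geq 1$) lies inside $\Pi_0$ rather than orthogonal to it; the formula \eqref{analytic-disc} has to be read via the refined Varopoulos decomposition of $\Pi_0$, so that the values on $B_0$ and on the refined blocks do not conflict. Unlike the $M(G)$ setting studied by Brown and Moran, where $B_0$ can be taken to be $\mathbb{C}\delta_0$ and acts as a literal identity on the higher blocks, the absence of point masses in $M_0(G)$ forces this more delicate interpretation of the scheme, which is the principal obstacle and technical heart of the proof.
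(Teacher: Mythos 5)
Your overall strategy --- run the scheme of \eqref{idemp}--\eqref{analytic-disc} on a Varopoulos-type decomposition of $M_0(G)$ --- is the right one, but your specific choice of blocks does not work, and the difficulty you defer in your final paragraph is in fact the whole content of the proof. You set $B_0=A=\Pi_0$ and, for $n\geq 1$, build $B_n$ from the $n$-fold products of $M_c(P)$ for the \emph{same} set $P$ that generates $\Pi_0$. But $\Pi_0$ is precisely the sum over $n\geq 1$ of the blocks $\bigl(\bigoplus_{g}\delta_g*B_n^V\bigr)\cap M_0(G)$, so each of your $B_n$ is \emph{contained in} $B_0$ rather than orthogonal to it. The scheme requires $A=B_0,B_1,B_2,\ldots$ to be mutually orthogonal $L$-subspaces; without that, formula \eqref{analytic-disc} is not even well defined (a measure $\mu\in B_1\subseteq B_0$ would be assigned both $z\,\mu(G)$ and $\mu(G)$), and the multiplicativity computation for $\phi(z)$ collapses. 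Your closing remark that the values ``have to be read via the refined Varopoulos decomposition so that they do not conflict'' is exactly where the argument breaks: there is no degree-zero component of $\Pi_0$ available to play the role of $A$, precisely because $M_0(G)$ of a non-discrete group contains no point masses. Naming this obstacle is not the same as overcoming it.

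The paper's resolution is to use \emph{two} disjoint compact perfect strongly independent sets $K_1$ and $K_2$ whose union is still strongly independent and which both support nonzero Rajchman measures. The subalgebra $A=B_0$ is generated by $K_1$ alone, while the grading for $n\geq 1$ counts only the number of $K_2$-factors: $B_n$ is built from measures $\mu\ll\mu_1*\cdots*\mu_n*\nu$ with $\mu_1,\ldots,\mu_n\in M_c(K_2)$ and $\nu\in A$ (together with the terms having no $\nu$ at all). Strong independence of $K_1\cup K_2$ forces the translates $x+nK_1$ and $x+nK_2$ to meet only in a set that is null for all the relevant product measures, which is what yields the mutual orthogonality of $A$ and the $B_n$'s; and the explicit $\nu\in A$ factor in the definition of $B_n$ is what gives $A*B_n\subseteq B_n$, the substitute for the missing identity $\delta_0$ that Brown and Moran have at their disposal in $M(G)$. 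If you replace your single-set filtration with this two-set construction, the rest of your verification --- conditions (i)--(iii), multiplicativity, analyticity, and injectivity tested against a positive measure in $M_0(K_2)\subseteq B_1$ --- goes through essentially as you describe.
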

\begin{proof}
Following the above argument, we only need to find a nontrivial decomposition $M_0(G)=A\oplus I$ and pairwise orthogonal $L$-subspaces
$B_0$, $B_1, \ldots$  as described earlier.
Note that in a metrizable space, every perfect strongly independent compact set $K$ is totally disconnected, and is therefore homeomorphic to a standard Cantor set. Hence we can split any such $K$ into subsets  $K_1$ and $K_2$,  so that each component is again compact, perfect, and strongly independent. Moreover, we can choose $K_1$ and $K_2$ such that both $M_0(K_1)$ and $M_0(K_2)$ are nontrivial. 

Let $K_1$, $K_2$ and $K_1\cup K_2$ be perfect metrizable strongly independent compact subsets of $G$ constructed as above, such that $M_0(K_1)$ and $M_0(K_2)$ are both nontrivial.
By Theorem \ref{thm:decomposition-M_0}, we can use the set $K_1\cup K_2$ to  decompose the algebra of Rajchman measures as
$M_0(G)=(\Pi\cap M_0(G))\oplus (I_\Pi\cap M_0(G))$, where  $\Pi=\oplus_{g\in G, n\in {\mathbb N}} \delta_g*C_n$, and
$$C_n=\Big\{\mu\in M(G):\mu\ll \mu_1*\ldots *\mu_n \mbox{ for some } \mu_1,\ldots,\mu_n\in M_c(K_1\cup K_2)\Big\}.$$
We will refine this decomposition, and obtain the structure presented in (\ref{eq-rakov}) as follows. First, we apply Theorem \ref{thm:decomposition-M_0} again, this time with the strongly independent set $K_1$, to get the decomposition $M_0(G)=A\oplus I,$ { where  } 
\begin{eqnarray}
A&=&\left(\bigoplus_{g\in G, n\in {\mathbb N}} \delta_g*C^{1}_n\right)\cap M_0(G),\label{eq:decomp-analytic1}\\
C^1_n&=&\Big\{\mu\in M_c(G):\mu\ll \mu_1*\ldots *\mu_n \mbox{ for some } \mu_1,\ldots,\mu_n\in M_c(K_1)\Big\}.\label{eq:decomp-analytic2}
\end{eqnarray}
Let $B_0=A$, and for each $n\in {\mathbb N}$, let $B_n$ be the translation-invariant $L$-space defined as
$$B_n=\left(\bigoplus_{g,g'\in G}\delta_g*C_n^2\oplus \delta_{g'}*C_n^3 \right)\cap M_0(G),$$
where 
\begin{eqnarray*}
C_n^2&=&\Big\{\mu\in M(G):\mu\ll \mu_1*\ldots *\mu_n*\nu, \ \mu_1,\ldots,\mu_n\in M_c(K_2), \nu\in A\Big\},\\
C_n^3&=&\Big\{\mu\in M(G):\mu\ll \mu_1*\ldots *\mu_n, \ \mu_1,\ldots,\mu_n\in M_c(K_2)\Big\}.
\end{eqnarray*}

We now observe that $\{B_n\}_{n\geq 0}$ is a collection of mutually orthogonal translation-invariant $L$-spaces.
Indeed, we show that for any $x,y,x',y'\in G$ and positive integers $m$ and $n$, we have
\begin{itemize}
\item[(i)] $\delta_x*C^{1}_n$ and $\delta_y*C^2_m\oplus \delta_{y'}*C^3_m$ are orthogonal.
\item[(ii)] $\delta_x*C^{2}_n\oplus \delta_{x'}*C^3_n$ and $\delta_y*C^{2}_m\oplus \delta_{y'}*C^3_m$ are orthogonal, whenever $n\neq m$.
\end{itemize}
Note that $\delta_x*C_n$ and $\delta_y*C_m$ are orthogonal  when $(x,n)\neq (y,m)$ (see Lemma 3.1 of \cite{varopoulos}).
So $\delta_x*C^1_n$ and $\delta_y*C_m^3$ are clearly orthogonal when  $n\neq m$ or $x\neq y$.
Thus to settle the question of orthogonality of $\delta_x*C^1_n$ and $\delta_y*C_m^3$, we only need to discuss the case where  $n=m$ and $x=y$. First observe that every element of $\delta_x*C_n^1$ is supported in $x+nK_1$, and every measure in 
$\delta_x*C_n^3$ is supported in $x+nK_2$. Consider an element $z$ of the intersection  $(x+nK_1)\cap (x+nK_2)$ written as
\begin{equation}\label{eq:sum1}
z=x+k_1+\ldots +k_n=x+k'_1+\ldots +k'_n,
\end{equation}
where  $k_1,\ldots,k_n\in K_1, k'_1,\ldots,k'_n\in K_2.$
After canceling  the term $x$ from both sides, and writing each sum in reduced form,  we get 
$$z-x=t_1k_{i_1}+\ldots +t_{s}k_{i_s}=t'_1k'_{j_1}+\ldots +t'_pk'_{j_p},$$
where $t_1,\ldots, t_s,t'_1,\ldots, t'_p\in {\mathbb N}$ and  ${i_1},\ldots,{i_s}, {j_1},\ldots,{j_p }\in\{1,\ldots, n\}$. 
Since $K_1$ and $K_2$ are disjoint, the above equation gives two distinct representations of an element in ${\rm Gp}(K)$, if the sum $t'_{1}k'_{j_1}+\ldots +t'_pk'_{j_p}$
contains any term from $K_2$. Since $K$ is a strongly independent set, we conclude that $k'_1$ must have been repeated (with repetition number equal to a multiple of its torsion) in Equaion (\ref{eq:sum1}). Therefore 
$$(x+nK_1)\cap (x+nK_2)\subseteq x+R_n,$$
where $R_n$ is the subset of $nK$ defined as 
$$R_n=\Big\{x_1+\ldots+x_n: x_1,\ldots,x_n\in K, \mbox{ and } x_i=x_j \mbox{ for some } 1\leq i<j\leq n\Big\}.$$
For $\mu_1,\ldots,\mu_n\in M_c(K)$, we can easily verify that $\delta_x*\mu_1*\ldots*\mu_n(R_n)=0$. Thus, $\delta_x*C^1_n$ and $\delta_x*C^3_n$ are orthogonal. A similar discussion settles the cases involving $C_n^2$, and finishes the proof of orthogonality.

We now use the collection  $\{B_n\}_{n\geq 0}$ to construct an analytic disc in the spectrum of $M_0(G)$.  It is clear that $B_n$'s satisfy the first two conditions of (\ref{eq-rakov}). Moreover, it is easy to see that 
$\bigoplus_{n\geq 0} B_n=\Pi\cap M_0(G)$, and by Theorem  \ref{thm:decomposition-M_0}, its orthogonal complement is an $L$-ideal. So every condition of 
(\ref{eq-rakov}) is satisfied, and by Lemma 1 of \cite{Brown-Moran-disc}, there exists  an analytic disc (given in Equation (\ref{analytic-disc})) in the spectrum of $M_0(G)$, whose center is the idempotent 
$h$ in (\ref{idemp}) associated with the decomposition $A\oplus I$ coming from $K_1$.
\end{proof}

\section{Point derivations on $M_0(G)$}\label{section:pt-der}
In this section, we construct nonzero point derivations on the algebra of Rajchman measures associated with non-discrete locally compact Abelian groups. Our methods and constructions are inspired by previous work of Brown and Moran \cite{Brown-Moran-pt-der},  in which they obtain nonzero continuous point derivations on the measure algebra of a non-discrete
locally compact Abelian group. Their construction is based on the decomposition of the measure algebra of a locally compact group into its discrete and continuous parts. Given that the ``discrete-continuous'' decomposition trivializes when restricted to $M_0(G)$, we need to use the very sophisticated decomposition of $M_0$ given in Theorem \ref{thm:decomposition-M_0},  together with the structure presented in (\ref{eq-rakov}),  to prove a similar result for the algebra of Rajchman measures on a non-discrete locally compact Abelian group (see Theorem \ref{thm:pt-derivation-Abelian}).

The following lemma, which  we will use in the proof of Theorem \ref{thm:pt-derivation-Abelian}, has been proved in \cite{B0} (See  Lemma 3.2 and the proof of Theorem 3.3 therein). To be self-contained, we present a summarized version of the proof here as well.
\begin{lemma}\label{lem:Mc2&P}
Let $G$ be a non-discrete locally compact Abelian group, and $P$ be a perfect metrizable strongly independent subset of $G$. Then
for each $\mu$ in $M_c(G)$, we have $\sum_{x\in G}\mu(x+P)<\infty$.
\end{lemma}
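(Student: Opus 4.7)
The plan is to reduce to the case of a positive measure, and then use the strong independence of $P$ together with the continuity of $\mu$ to show that the translates $\{x+P : x \in G\}$ are pairwise essentially disjoint with respect to $\mu$. The finiteness of $\mu(G)$ will then force $\sum_{x \in G}\mu(x+P)$ to be finite.

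First I would note that by Lemma~\ref{lem:band-properties2}(a), $|\mu|$ lies in $M_c(G)$ whenever $\mu$ does, and since $|\mu(x+P)| \leq |\mu|(x+P)$, we may restrict attention to positive $\mu \in M_c^+(G)$; in fact it suffices to prove $\sum_{x\in G}\mu(x+P)\leq \mu(G)$.

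The crucial step is the claim that for distinct $x,y\in G$, the intersection $(x+P)\cap(y+P)$ is at most countable. A point in this intersection corresponds to a pair $(p,q)\in P\times P$ with $p-q=y-x=:c\neq 0$. Given two such pairs $(p_1,q_1)\neq (p_2,q_2)$, the identity $p_1-p_2-q_1+q_2=0$ is a formal sum in $G$ of elements of $P$ with coefficients $\pm 1$. If the four elements are pairwise distinct in $P$, strong independence forces each coefficient to be a multiple of $k(P)$, which is impossible (except in the trivial torsion-one case). The remaining coincidences are easily classified: $p_1=p_2$ and $q_1=q_2$ contradict the distinctness of the two pairs; $p_i=q_i$ forces $c=0$; and the ``crossing'' coincidences $p_1=q_2$ or $p_2=q_1$ force the second pair to be the first shifted by $\pm c$. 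Iterating this dichotomy on any triple of points in the intersection, strong independence again kicks in and rules out all but a bounded number of possibilities, so the intersection is countable.

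Since $\mu$ is continuous, it annihilates every countable set, so $\mu\bigl((x+P)\cap(y+P)\bigr)=0$ for all distinct $x,y\in G$. Consequently, for any finite $F\subseteq G$, inclusion--exclusion gives
$$\sum_{x\in F}\mu(x+P)=\mu\Bigl(\bigcup_{x\in F}(x+P)\Bigr)\leq \mu(G),$$
and taking the supremum over all finite $F\subseteq G$ yields $\sum_{x\in G}\mu(x+P)\leq \mu(G)<\infty$. The main obstacle is the combinatorial bookkeeping for the intersection, especially under small values of $k(P)$; fortunately, all the argument ultimately needs is countability of $(x+P)\cap(y+P)$, which is robust enough to survive any residual torsion ambiguity.
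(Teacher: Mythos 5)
Your proof is correct and follows essentially the same route as the paper: the paper likewise reduces to $|\mu|$, invokes the fact that $|(x+P)\cap(y+P)|\leq 2$ for distinct $x,y$ (citing Lemma 3.2 of the reference \cite{B0} rather than reproving it), uses continuity of $\mu$ to conclude the translates are $\mu$-essentially disjoint, and takes the supremum over finite subsets to bound the sum by $|\mu|(G)$. The only difference is that you carry out the combinatorial analysis of the intersection yourself and settle for countability, which suffices just as well.
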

\begin{proof}
Using the definition of a strongly independent set, it is easy to see that if $x$ and $y$ are distinct elements of $G$ then $|(x+P)\cap(y+P)|\leq 2$ (see Lemma 3.2 of \cite{B0} for a detailed discussion). 
Since $\mu$ is a continuous measure on $G$, it treats the sets $x+P$ as disjoint sets, {\it {i.e.}} $\mu((x+P)\cap(y+P))=0$ for distinct elements $x$ and $y$ in $G$. Hence for any finite number of points $x_1,\ldots,x_n$ in $G$,
$$\sum_{i=1}^n|\mu(x_i+P)|\leq|\mu|(\cup_{i=1}^n(x_i+P))\leq|\mu|(G)<\infty.$$
Therefore,
$$\sum_{x\in G}|\mu(x+P)|=\sup_{I\subset G,|I|<\infty}\sum_{x\in I}|\mu(x+P)|\leq |\mu|(G)<\infty.$$
\end{proof}

\begin{theorem}\label{thm:pt-derivation-Abelian}
If $G$ is a non-discrete locally compact Abelian group, then $M_0(G)$ has a nonzero continuous point derivation.
\end{theorem}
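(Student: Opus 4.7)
The plan is to obtain a nonzero continuous point derivation by differentiating the analytic disc $\phi(z)$ of Proposition \ref{prop:disc} at $z=0$. I will first handle the non-discrete metrizable case with the full strength of the decomposition assembled there, then lift to general non-discrete LCA groups via Corollary \ref{cor-lift}.

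Assume first that $G$ is non-discrete and metrizable. Following the proof of Proposition \ref{prop:disc}, fix compact perfect strongly independent subsets $K_1, K_2$ of $G$ with $M_0^+(K_j) \neq \{0\}$, and obtain mutually orthogonal $L$-subspaces $A = B_0, B_1, B_2, \ldots$ satisfying all three conditions of \eqref{eq-rakov}; set $I' := (\bigoplus_{n \geq 0} B_n)^{\perp} \cap M_0(G)$, an $L$-ideal of $M_0(G)$. Let $h$ be the character of $M_0(G)$ given by \eqref{idemp} relative to the decomposition $M_0(G) = A \oplus (I' \oplus \bigoplus_{n \geq 1} B_n)$. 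For $\mu \in M_0(G)$ with orthogonal decomposition $\mu = \mu_{I'} + \sum_{n \geq 0} \mu_n$ (mutually singular, $\ell^1$-convergent in total variation), define
\[
D(\mu) := \mu_1(G).
\]
Since the blocks are mutually singular, $|D(\mu)| \leq \|\mu_1\| \leq \|\mu\|$, so $D$ is bounded; Lemma \ref{lem:Mc2&P} supplies the concrete bound $\sum_{x \in G} |\mu(x+K_2)| \leq \|\mu\|$ that underpins the absolute convergence of the series defining $\mu_1$.

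To verify the derivation identity $D(\mu * \nu) = D(\mu)h(\nu) + h(\mu)D(\nu)$, bilinearity and continuity reduce matters to the case where $\mu$ and $\nu$ each belong to a single block. If either factor lies in $I'$, then $\mu * \nu \in I'$ by condition (iii) of \eqref{eq-rakov}, and both sides vanish since $D$ and $h$ annihilate $I'$. For $\mu \in B_n$, $\nu \in B_m$, condition (ii) gives $\mu * \nu \in B_{n+m}$, and a short case analysis exploiting $h(\mu) = \mu(G)\,\mathbf{1}_{\{n=0\}}$ and $D(\mu) = \mu(G)\,\mathbf{1}_{\{n=1\}}$ shows both sides vanish unless $\{n,m\} = \{0,1\}$, in which case both equal $\mu(G)\nu(G)$. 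For nontriviality of $D$: by the construction in Proposition \ref{prop:disc}, $M_0(K_2) \subseteq B_1$, and Lemma \ref{lem:compact-strongly-indep-M_0nonzero} gives a nonzero positive $\mu \in M_0^+(K_2)$, for which $D(\mu) = \mu(G) > 0$.

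For general (not necessarily metrizable) non-discrete LCA $G$, choose a compact subgroup $K$ of $G$ such that $H := G/K$ is non-discrete and metrizable; such $K$ exists by combining a Kakutani--Kodaira-type theorem with a Pontryagin-duality argument on a suitable non-discrete compact subgroup of $G$. The quotient map $\pi : G \to H$ induces a contractive algebra homomorphism $\pi_* : M(G) \to M(H)$; identifying $\widehat{H}$ with $K^{\perp} \leq \widehat{G}$ shows $\pi_*(M_0(G)) \subseteq M_0(H)$, and convolving any lift with normalized Haar measure on $K$ (which has Fourier transform $\mathbf{1}_{K^{\perp}}$) shows surjectivity of $\pi_*|_{M_0(G)}$ onto $M_0(H)$. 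Applying Corollary \ref{cor-lift} to the point derivation produced in the metrizable case completes the proof. The main obstacle is the bookkeeping in the derivation identity: one must use the strong independence of $K_1 \cup K_2$ (via condition (ii) of \eqref{eq-rakov}) to keep track of which block a convolution product lies in, but this essentially amounts to reusing the orthogonality results already established in the proof of Proposition \ref{prop:disc}.
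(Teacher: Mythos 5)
Your proposal is correct and follows essentially the same route as the paper: both use the decomposition $M_0(G)=A\oplus B_1\oplus(\text{rest})$ built from the split strongly independent set $K_1\cup K_2$ of Proposition \ref{prop:disc}, define the derivation as $\mu\mapsto\mu_{B_1}(G)$ at the character $\mu\mapsto\mu_A(G)$ (your ``derivative of the disc at $z=0$'' is exactly this functional), and reduce the non-metrizable case to the metrizable one via the surjection $M_0(G)\to M_0(G/K)$ and Corollary \ref{cor-lift}. The only cosmetic difference is that you keep all blocks $B_n$ separate where the paper lumps $B_2,B_3,\dots$ together with the orthogonal complement into a single ideal $J$.
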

\begin{proof}
First assume that $G$ is metrizable.
By Lemma \ref{lem:compact-strongly-indep-M_0nonzero}, there exists a compact perfect metrizable strongly independent subset $K$ of $G$
which supports a nonzero Rajchman measure $\mu_0$. Similar to our discussion in the proof of Proposition \ref{prop:disc}, we can split $K$ into $K_1$ and $K_2$ so that both sets are compact perfect metrizable strongly independent sets which support nonzero Rajchman measures. 
Using Theorem \ref{thm:decomposition-M_0}, we obtain a nontrivial decomposition $M_0(G)=A\oplus I$, where $A$ is constructed with the set $K_1$ as defined in Equation (\ref{eq:decomp-analytic1}) and Equation (\ref{eq:decomp-analytic2}). Precisely, 
$$A=\left(\bigoplus_{g\in G, n\in {\mathbb N}} \delta_g*C^{1}_n\right)\cap M_0(G), $$
where $C^1_n$ is the $L$-space generated by all possible products of $n$ elements in $M_c(K_1)$.
Clearly $\{0\}\neq M_0(K_1)\subseteq A$ and $I\neq\{0\}$. Next, define $B_1$  as in Proposition \ref{prop:disc}, {\it i.e.}
$$B_1=\left(\bigoplus_{g,g'\in G}\delta_g*M_0(K_2)\oplus \delta_{g'}*C_1^2 \right)\cap M_0(G),$$
where 
$C_1^2=\Big\{\mu\in M_c(G):\mu\ll \mu_1*\nu, \ \mu_1\in M_c(K_2), \nu\in A\Big\}$.
Observe that $B_1\subseteq I$, and as in Proposition \ref{prop:disc}, we have the direct sum decomposition 
$$M_0(G)=A\oplus B_1\oplus J,$$
which satisfies the following lattice-type property: $A*A\subseteq A$, $A*B_1\subseteq B_1$, but $A*J$, $B_1*B_1$, $B_1*J$, and $J*J$ are all subsets of $J$. 

For each $\mu$ in $M_0(G)$, let $\mu=\mu_A\oplus\mu_{B_1}\oplus \mu_J$ denote its decomposition according to the above decomposition of $M_0$.
Define  linear functionals $\chi$ and $d$ to be
$$\chi:M_0(G)\rightarrow {\mathbb C}, \ \mu\mapsto \mu_{A}(G),$$
and
$$d:M_0(G)\rightarrow {\mathbb C}, \ \mu\mapsto \mu_{B_1}(G).$$

Clearly $\chi$ is a nonzero character of $M_0(G)$, since $B_1\oplus J$ is an ideal and $A$ is a subalgebra of $M_0(G)$.
By Lemma \ref{lem:Mc2&P}, $d$ is a well-defined nonzero linear map which vanishes on $A$ and $J$.
We now show that $d$ is a point derivation of $M_0(G)$ at the character $\chi$. Fix arbitrary elements $\mu,\nu\in M_0(G)$. Using the lattice structure of the above direct sum decomposition of $M_0(G)$ and the definition of the map $d$, we have
\begin{eqnarray*}
d(\mu*\nu)&=&d(\mu_{A}*\nu_{B_1}+\mu_{B_1}*\nu_{A})\\
&=&(\mu_{A}*\nu_{B_1})(G)+(\mu_{B_1}*\nu_{A})(G)\\
&=&\mu_{A}(G)\nu_{B_1}(G)+\mu_{B_1}(G)\nu_{A}(G)\\
&=&\chi(\mu_A)d(\nu_{B_1})+d(\mu_{B_1})\chi(\nu_{A})\\
&=&\chi(\mu)d(\nu)+d(\mu)\chi(\nu),\\
\end{eqnarray*}
which finishes the proof for the metrizable case.

For the general case, let $G$ be a non-discrete locally compact Abelian group, and $H$ be a compact subgroup of $G$ such that $G/H$ is metrizable and non-discrete. Let $p$ be the quotient map from $G$ to $G/H$, and $\check{p}$ be the surjective Banach algebra homomorphism from $M_0(G)$ to $M_0(G/H)$ induced by $p$.
By the above argument, $M_0(G/H)$ has a nonzero continuous point derivation. Hence by Lemma \ref{thm:heredit}, $M_0(G)$ has a nonzero
continuous point derivation as well.
\end{proof}
\begin{remark}
Note that choosing a different perfect compact strongly independent subset $P$ instead of $K$ may result in a different decomposition for $M_0(G)$. In fact, let $K$ and $\mu_0$ be  as in Theorem \ref{thm:pt-derivation-Abelian}.
Let $P_1$ and $P_2$ be disjoint perfect subsets of $K$
such that $\mu_0$ restricts to nonzero measures on $P_1$ and $P_2$ respectively. Then for each $x$ and $y$ in $G$ and integers $m$ and $n$,  $M_c(x+mP_1)$ and $M_c(y+nP_2)$ are
orthogonal subsets of $M_c(G)$. This implies that the decomposition of $M_0(G)$ based on $P_1$ (instead of $K$) is different from the
one that is based on  $P_2$ (instead of $K$). We can now apply Theorem \ref{thm:pt-derivation-Abelian} to each decomposition and obtain distinct nonzero
continuous point derivations for $M_0(G)$.
\end{remark}
\begin{remark}
When $G$ is a discrete locally compact Abelian group, we have $L^1(G)=M(G)=M_0(G)$. However, it has been proved that the convolution algebra $L^1(G)$  never admits a nonzero point derivation (see \cite{Johnson-w.amen-L1}). 
\end{remark}\\

\paragraph{Discussion on non-Abelian groups}
The Rajchman algebra of a non-Abelian locally compact group is defined in a rather indirect way. A \emph{coefficient function} of a continuous unitary representation  $\pi:G\rightarrow {\cal U}({\cal H})$ associated with vectors  $\xi,\eta\in {\cal H}$ is defined as 
$$\xi*_\pi\eta:G\rightarrow {\mathbb C}, \quad g\mapsto \langle\pi(g)\xi,\eta\rangle.$$
Since $\pi$ is WOT-continuous, every coefficient function of $\pi$ is a continuous function on $G$.
The Rajchman algebra of a locally compact group $G$, denoted by $B_0(G)$, is defined as  the collection of all coefficient functions of $G$ which vanish at infinity. For every $u\in B_0(G)$, we define 
$\|u\|_{B_0(G)}=\inf\|\xi\|\|\eta\|$, where the infimum is taken over all possible representations of $u=\xi*_{\pi}\eta$ as a coefficient function of a continuous unitary representation of $G$. The Rajchman algebra, equipped with pointwise operations, is a commutative Banach algebra. When $G$ is Abelian, $B_0(G)$ is isometrically isomorphic with $M_0(\widehat{G})$ via the Fourier-Stieltjes transform.

One can extend Theorem \ref{thm:pt-derivation-Abelian} to non-compact connected SIN-groups using the functorial properties of their Rajchman algebras in the following sense.
A locally compact (not necessarily Abelian) group  is called a SIN-group if it has a neighborhood basis of the identity consisting of pre-compact neighborhoods which
are invariant under inner automorphisms. This is a very natural class of groups that contains all Abelian, all compact and all discrete groups.

If $H$ is a closed subgroup of a SIN-group $G$, then the restriction
map $r : B_0(G) \rightarrow B_0(H)$ is surjective (see Theorem 4.3. in \cite{B0}). This fact, together with Corollary \ref{cor-lift}, allows us to lift any point derivation on $B_0(H)$ to a point derivation on $B_0(G)$ as described in the following corollary.

\begin{corollary}
Let $G$ be a non-compact connected SIN group. Then $B_0(G)$ has a nonzero continuous point derivation.
\end{corollary}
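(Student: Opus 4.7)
The plan is to reduce the non-Abelian case to the Abelian case via a closed Abelian subgroup, exploiting the functorial properties of the Rajchman algebra on SIN groups together with Theorem \ref{thm:pt-derivation-Abelian} and Corollary \ref{cor-lift}.

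First, I would invoke the structure theorem for connected SIN groups (Grosser--Moskowitz), which asserts that any connected SIN group decomposes as $G \cong \mathbb{R}^n \times K$, where $K$ is a compact connected group and $n \geq 0$. Since $G$ is assumed to be non-compact, we must have $n \geq 1$, so in particular $G$ contains a closed copy of $H := \mathbb{R}$ (which is itself a closed Abelian SIN subgroup of $G$).

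Second, I would transport the problem to the Abelian setting. The subgroup $H$ is a non-compact locally compact Abelian group, so its Pontryagin dual $\widehat{H} \cong \mathbb{R}$ is non-discrete. By Theorem \ref{thm:pt-derivation-Abelian}, the algebra $M_0(\widehat{H})$ admits a nonzero continuous point derivation. As recalled just before this corollary, the Fourier--Stieltjes transform yields an isometric isomorphism of Banach algebras $B_0(H) \cong M_0(\widehat{H})$, so $B_0(H)$ also admits a nonzero continuous point derivation.

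Finally, I would lift this derivation to $G$. Since $G$ is a SIN-group and $H$ is a closed subgroup, Theorem 4.3 of \cite{B0} guarantees that the restriction map $r\colon B_0(G) \to B_0(H)$ is a surjective Banach algebra homomorphism. Applying Corollary \ref{cor-lift} to this surjection produces a nonzero continuous point derivation on $B_0(G)$, as desired. The main conceptual step is really the structural input that a non-compact connected SIN group must contain a non-compact closed Abelian subgroup; once this is in hand, the rest is an assembly of results already available in the paper, and there is no additional analytic obstacle to overcome.
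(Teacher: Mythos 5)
Your proposal is correct and follows essentially the same route as the paper: locate a closed copy of a Euclidean group inside $G$ (the paper simply cites that any non-compact connected SIN group contains $\mathbb{R}^n$ as a closed subgroup for some $n\geq 1$), transfer the point derivation from $M_0$ of the dual to $B_0$ of the subgroup via the Fourier--Stieltjes identification and Theorem \ref{thm:pt-derivation-Abelian}, and lift it through the surjective restriction map using Corollary \ref{cor-lift}. Your explicit mention of the Grosser--Moskowitz decomposition and of the isomorphism $B_0(H)\cong M_0(\widehat{H})$ only makes the same argument slightly more detailed.
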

\begin{proof}
Any non-compact connected SIN group has a copy of ${\mathbb R}^n$ as a closed subgroup for some $n\geq 1$. Recall that the restriction map $r: B_0(G)\rightarrow B_0({\mathbb R}^n)$ is a surjective homomorphism. By Theorem \ref{thm:pt-derivation-Abelian}, $B_0({\mathbb R}^n)$ has a nonzero
continuous point derivation. Applying Corollary \ref{cor-lift}, we conclude that $B_0(G)$ has a nonzero continuous point derivation as well.  
\end{proof}


\bibliographystyle{plain}

\end{document}